\font\smallit=cmti10
\renewcommand\section{\@startsection {section}{1}{\z@}
{-30pt \@plus -1ex \@minus -.2ex}
{2.3ex \@plus.2ex}
{\normalfont\normalsize\bfseries}}
\renewcommand\subsection{\@startsection{subsection}{2}{\z@}
{-3.25ex\@plus -1ex \@minus -.2ex}
{1.5ex \@plus .2ex}
{\normalfont\normalsize\bfseries}}
\renewcommand{\@seccntformat}[1]{\csname the#1\endcsname. }
\newtheorem{theorem}{Theorem}
\newtheorem{corollary}[theorem]{Corollary}
\newenvironment{definition}[1][Definition]{\begin{trivlist}
\item[\hskip \labelsep {\bfseries #1}]}{\end{trivlist}}
\newenvironment{example}[1][Example]{\begin{trivlist}
\item[\hskip \labelsep {\bfseries #1}]}{\end{trivlist}}
\newenvironment{remark}[1][Remark]{\begin{trivlist}
\item[\hskip \labelsep {\bfseries #1}]}{\end{trivlist}}
\begin{document}

\begin{center}
\uppercase{\bf On Permutiples Having a Fixed Set of Digits}
\vskip 20pt
{\bf Benjamin V. Holt}\\
{\smallit Department of Mathematics, Humboldt State University, Arcata, CA 95521, USA}\\
{\tt bvh6@humboldt.edu}\\
\vskip 10pt
\end{center}
\vskip 30pt

\vskip 30pt

\centerline{\bf Abstract}

\noindent
A permutiple is the product of a digit preserving multiplication, that is, a number
which is an integer multiple of some permutation of its digits.
Certain permutiple problems, particularly transposable, cyclic, and, more recently, palintiple numbers,
have been well-studied. In this paper we study the problem of general digit preserving multiplication. 
We show how the digits and carries of a permutiple are related
and utilize these relationships to develop methods for finding new permutiple examples from old.
In particular, we shall focus on the problem of finding new permutiples 
from a known example having the same set of digits.

\pagestyle{myheadings}
\thispagestyle{empty}
\baselineskip=12.875pt
\vskip 30pt 

\section{Introduction}

A \textit{permutiple} is a natural number with the property of being an integer multiple of some permutation of its digits. 
Digit permutation problems are nothing new \cite{guttman, sutcliffe} and have been a topic of 
study for both amateurs and professionals alike \cite{kalman}.
A relatively well-studied example of permutiples includes \emph{palintiple} numbers, 
also known as reverse multiples \cite{kendrick_1,sloane,young_1}, which are 
integer multiples of their digit reversals and include well-known base-10 examples such as
$87912=4\cdot 21978$ and $98901 = 9 \cdot 10989$.
As noted by Sutcliffe \cite{sutcliffe} in his seminal paper on palintiple numbers,
cyclic digit permutations such as $714285 = 5 \cdot 142857$ are also well-studied examples. 
We also note that 142857 is an example of a \textit{cyclic number}; not only does multiplication by 5 permute the digits, 
but 2,3,4, and 6 also produce cyclic digit permutations.

Permutiples for which the digits are cyclically permuted 
are relatively well-understood and their description is fairly straightforward in comparison to palintiples.
The digits of cyclic permutiples are found in repeating base-$b$ decimal expansions of the form $a/p$ where 
$a<p$ and $p$ is a prime which does not divide $b$ \cite{guttman, kalman}.  
On the other hand, palintiples (digit reversing permutations)
admit quite a variety of classifications \cite{kendrick_1, kendrick_2} and are not nearly as well-understood.
Young \cite{young_1, young_2}, building upon the body of work of Sutcliffe \cite{sutcliffe} and others \cite{grimm, kolsmol}, 
translates the palintiple problem into graph-theoretical language by representing 
an efficient palintiple search method as a tree-graph where the possible carries are represented as nodes 
and the potential digits are associated with the edges.
Continuing the work of Young \cite{young_1, young_2}, Sloane \cite{sloane} modified Young's tree-graph representation
into the \emph{Young graph} which is a visualization of digit-carry palintiple structure.
The paper identifies and studies several Young graph isomorphism classes which describe palintiple type. 
Furthering the work of Sloane \cite{sloane}, Kendrick \cite{kendrick_1} proves two of Sloane's \cite{sloane}
main conjectures involving Young graph isomorphism classes which describe two well-understood palintiple types. 
The work of Holt \cite{holt_1,holt_3} takes a more elementary approach and classifies palintiples
according to patterns exhibited by their carries. This approach, as noted by Kendrick \cite{kendrick_1}, seems to coincide with 
certain Young graph isomorphism classes with Holt \cite{holt_3} 
conjecturing that the class of palintiples characterized by the 1089 graph are 
precisely the collection of \textit{symmetric palintiples} 
(the carry sequence is palindromic) described in \cite{holt_1}.
Kendrick's work \cite{kendrick_1, kendrick_2} reveals the sheer multitude of palintiple types 
when classified according to Young graph isomorphism.

In this paper we establish some general properties of digit preserving multiplication. 
We generalize the results for palintiple numbers found in 
Young \cite{young_1}, Sloane \cite{sloane}, Kendrick \cite{kendrick_1}, and Holt \cite{holt_1} 
to an arbitrary permutation.
Using these results, we develop methods for finding new permutiples from old. 
In particular, we consider the problem of finding new base-$b$ permutiples with multiplier $n$ 
having a fixed set of digits from a single known example. 
Moreover, we find a condition under which our methods give us all permutiples of a particular base and multiplier
having the same digits as a known example. 

\section{Permutiple Digits and Carries}

We begin with a definition. We shall use $(d_k,d_{k-1},\ldots,d_0)_b$ to denote the 
natural number $\sum_{j=0}^{k}d_j b^j$ where each $0\leq d_j<b$.
\begin{definition}
Let $n$ be a natural number and $\sigma$ be a permutation on $\{0,1,2,\ldots, k\}$.
We say that $(d_k, d_{k-1},\ldots, d_0)_b$  is an $(n,b,\sigma)$-\textit{permutiple} provided 
\[
(d_k,d_{k-1},\ldots,d_1, d_0)_b=n(d_{\sigma(k)},d_{\sigma(k-1)},\ldots, d_{\sigma(1)}, d_{\sigma(0)})_b.
\]
\end{definition}

Using the language established above, letting 
$\rho=\left( 
\begin{array}{ccccc}
0 & 1 & 2 & 3 & 4 \\
4 & 3 & 2 & 1 & 0 \\
\end{array}
\right)$,
87912 is a $(4,10,\rho)$-permutiple since $87912=4\cdot 21978$.

In order to avoid introducing extra digits when multiplying, it is assumed that $n<b$. 
We note, however, that in order to circumvent overly cumbersome theorem statements, we do allow for leading zeros. 
Letting $\varepsilon$ be the identity permutation, every natural number is a $(1,b,\varepsilon)$-permutiple. 
Such trivial examples are ignored so that $n>1$.  Furthermore, $b=2$ implies that
$n=1$. Therefore, we impose the additional restriction that $b \neq 2$. Thus, hereafter, we assume that
$n$ and $b$ are natural numbers such that $1<n<b$.  

The following two theorems are an exceedingly straightforward generalization of the first and third theorems
of Holt \cite{holt_1} which concern palintiple numbers.
A description of single-digit multiplication in general is as follows: 
let $p_j$ denote the $j$th digit of the product, $c_j$ the $j$th
carry, and $q_{j}$ the $j$th digit of the number being multiplied by $n$. Then the
iterative algorithm for single-digit multiplication is 
\[
\begin{split}
c_0&=0\\
p_j&=\lambda(n q_{j}+c_j)\\
c_{j+1}&=\left[ nq_{j}+c_j-\lambda(nq_{j}+c_j) \right]\div b
\end{split}
\]
where $\lambda$ gives the least non-negative residue modulo $b$. Since
$(p_k, p_{k-1}, \ldots, p_0)_b$ is a $(k+1)$-digit number, $c_{k+1}=0$.
For any $(n,b,\sigma)$-permutiple, $(d_k, d_{k-1},\ldots, d_0)_b$, 
 $q_j=d_{\sigma(j)}$ so that $d_{j}=p_j=\lambda(nd_{\sigma(j)}+c_j)$. Hence, we have our first result.

\begin{theorem}
Let $(d_k, d_{k-1},\ldots, d_0)_b$ be an $(n,b,\sigma)$-permutiple and let $c_j$ be the
$j$th carry. Then
\[b c_{j+1}-c_j=nd_{\sigma(j)}-d_{j}\]
for $0\leq j \leq k$.
\label{fund}
\end{theorem}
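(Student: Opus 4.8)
The plan is to read the claimed identity directly off the single-digit multiplication algorithm recorded just above the statement, with essentially no extra work beyond rearranging terms. First I would recall that for the product $(p_k,p_{k-1},\ldots,p_0)_b = n(q_k,\ldots,q_0)_b$, the carry recurrence reads
\[
c_{j+1}=\left[ nq_{j}+c_j-\lambda(nq_{j}+c_j) \right]\div b .
\]
The quantity in brackets is by construction a non-negative multiple of $b$ (that is exactly why the division makes sense), so multiplying through by $b$ gives the exact equality $bc_{j+1}=nq_{j}+c_j-\lambda(nq_{j}+c_j)$, and since $p_j=\lambda(nq_{j}+c_j)$ this becomes $bc_{j+1}=nq_j+c_j-p_j$.

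Next I would specialize to an $(n,b,\sigma)$-permutiple $(d_k,\ldots,d_0)_b$. Here the number being multiplied is the $\sigma$-permutation of the digits, so $q_j=d_{\sigma(j)}$, and the product is the permutiple itself, so $p_j=d_j$ for every $j$ in the range $0\le j\le k$. Substituting these two identifications into $bc_{j+1}=nq_j+c_j-p_j$ and moving $c_j$ to the left yields
\[
bc_{j+1}-c_j=nd_{\sigma(j)}-d_j ,
\]
which is exactly the asserted relation. The only point that needs a word of care is the top index $j=k$: there the identity involves $c_{k+1}$, and one must invoke the observation (already noted in the excerpt) that since $(d_k,\ldots,d_0)_b$ has $k+1$ digits the algorithm terminates with $c_{k+1}=0$, so the $j=k$ instance is legitimate and reads $-c_k=nd_{\sigma(k)}-d_k$.

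I do not expect any genuine obstacle here; the statement is a transcription of the defining recurrence of long multiplication once the permutiple condition is used to match $q_j$ with $d_{\sigma(j)}$ and $p_j$ with $d_j$. If anything, the step most worth stating explicitly is the passage from the ``$\div b$'' form to the exact integer equation $bc_{j+1}=nq_j+c_j-p_j$, i.e.\ verifying that $nq_j+c_j-\lambda(nq_j+c_j)$ is divisible by $b$; this is immediate from the meaning of $\lambda$ as the least non-negative residue modulo $b$. The rest is bookkeeping.
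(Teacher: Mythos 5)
Your proposal is correct and is essentially identical to the paper's own derivation: the paper also obtains the identity by substituting $q_j=d_{\sigma(j)}$ and $p_j=d_j$ into the carry recurrence of the single-digit multiplication algorithm and rearranging, with the remark that $c_{k+1}=0$ covers the top index. No gaps; your explicit note about why $nq_j+c_j-\lambda(nq_j+c_j)$ is divisible by $b$ is a minor point the paper leaves implicit.
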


As is the case for palintiples, the following shows that the carries of any permutiple
are less than the multiplier.

\begin{theorem}
Let $(d_k, d_{k-1},\ldots, d_0)_b$ be an $(n,b,\sigma)$-permutiple and let $c_j$ be the
$j$th
carry. Then $c_j\leq n-1$ for all $0 \leq j \leq k$.
\label{carries}
\end{theorem}
\begin{proof}
The proof will proceed by induction. We have $c_0=0\leq n-1$. Now suppose that $c_j\leq n-1$.
For a contradiction suppose $c_{j+1} \geq n$. Then Theorem \ref{fund} implies
$bc_{j+1}-c_j+d_{j}=nd_{\sigma(j)}$. By our inductive hypothesis we have
$bn-(n-1)=(b-1)n+1 \leq nd_{\sigma(j)}$. Therefore, $d_{\sigma(j)}>b-1$ which is a
contradiction.
\end{proof}

The following is a converse to Theorem \ref{fund}.

\begin{theorem}
Suppose $b c_{j+1}-c_j=nd_{\sigma(j)}-d_{j}$ for all $0\leq j \leq k$ where
$(d_k,\ldots, d_0)$ is a $(k+1)$-tuple of base-$b$ digits and $(c_k,\ldots,c_0)$ 
is a $(k+1)$-tuple of base-$n$ digits such that $c_0=0$.
Then $(d_k,\ldots, d_0)_b$ is an $(n,b,\sigma)$-permutiple with carries $c_k,\ldots, c_0$.
\label{fund_conv}
\end{theorem}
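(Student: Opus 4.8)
The plan is to establish both conclusions --- that $(d_k,\ldots,d_0)_b$ is an $(n,b,\sigma)$-permutiple and that its carry sequence is exactly $c_k,\ldots,c_0$ --- by first extracting the numerical identity from the hypothesis and then reconstructing the single-digit multiplication algorithm step by step. Throughout I use the standing convention, already built into the algorithm described above, that $c_{k+1}=0$ since the product is a $(k+1)$-digit number.

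\textbf{Step 1: the numerical identity.} First I would multiply the hypothesis $bc_{j+1}-c_j=nd_{\sigma(j)}-d_j$ by $b^j$ and sum over $0\le j\le k$. The left-hand side telescopes to $b^{k+1}c_{k+1}-c_0$, which is $0$ because $c_0=0=c_{k+1}$. The right-hand side equals $n\sum_{j=0}^{k}d_{\sigma(j)}b^j-\sum_{j=0}^{k}d_jb^j=n(d_{\sigma(k)},\ldots,d_{\sigma(0)})_b-(d_k,\ldots,d_0)_b$. Hence $(d_k,\ldots,d_0)_b=n(d_{\sigma(k)},\ldots,d_{\sigma(0)})_b$, which is the defining identity of an $(n,b,\sigma)$-permutiple.

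\textbf{Step 2: the carries are the $c_j$.} Next I would run the single-digit multiplication algorithm on $n\cdot(d_{\sigma(k)},\ldots,d_{\sigma(0)})_b$, denoting by $c_j'$ the carries it produces and by $p_j$ the digits, and prove $c_j'=c_j$ and $p_j=d_j$ for $0\le j\le k$ by induction on $j$. The base case is $c_0'=0=c_0$. For the inductive step, assume $c_j'=c_j$. Rearranging the hypothesis gives $nd_{\sigma(j)}+c_j=d_j+bc_{j+1}$; since $0\le d_j<b$ and $c_{j+1}$ is a non-negative integer --- this is precisely where the assumption that each $c_j$ is a base-$n$ digit is used --- uniqueness of quotient and remainder upon division by $b$ forces $\lambda(nd_{\sigma(j)}+c_j)=d_j$ and $\bigl[nd_{\sigma(j)}+c_j-\lambda(nd_{\sigma(j)}+c_j)\bigr]\div b=c_{j+1}$. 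Thus $p_j=d_j$ and $c_{j+1}'=c_{j+1}$, which closes the induction; combined with Step 1, this identifies $(d_k,\ldots,d_0)_b$ as the product in question with carry sequence $c_k,\ldots,c_0$.

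\textbf{Expected obstacle.} The algebra here is entirely routine, so the only real subtlety is the treatment of the endpoint. One has to be explicit that $c_{k+1}=0$: this is not forced by the hypothesis together with the digit ranges alone (the relation at $j=k$ merely defines $c_{k+1}$ as some integer), but rather is the convention making the product a genuine $(k+1)$-digit number, and it is exactly what makes the telescoping sum vanish so that the permutiple identity comes out clean. A secondary point worth stating carefully is that the range hypotheses on the $c_j$ are not decoration: non-negativity of $c_{j+1}$ is what licenses reading off $d_j$ as the least non-negative residue in Step 2.
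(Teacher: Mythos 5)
Your proof is correct and follows essentially the same route as the paper: the telescoping sum $\sum_{j=0}^{k}(bc_{j+1}-c_j)b^j=0$ yields the permutiple identity, and an induction starting from $c_0'=0$ identifies the carries. The only cosmetic difference is that where the paper cites Theorem \ref{fund} to get the recurrence for the actual carries, you unfold the division-algorithm argument inline (and you are right, and more explicit than the paper, that the convention $c_{k+1}=0$ is what makes the telescoped sum vanish).
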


\begin{proof}
By a simple calculation,
\[
\sum_{j=0}^{k} (nd_{\sigma(j)}-d_{j})b^j=\sum_{j=0}^{k}(b c_{j+1}-c_j)b^j=0
\]
so that $(d_k,\ldots, d_0)_b$ is an $(n,b,\sigma)$-permutiple. 
Letting $(\hat{c}_k,\ldots,\hat{c}_0)$ be the carries, an 
application of Theorem \ref{fund} and a simple induction argument establish 
that $\hat{c}_j=c_j$ for all $0\leq j \leq k$. 
\end{proof}

Letting $\psi$ be the $(k+1)$-cycle $(0,1,2,\ldots,k)$, 
it is convenient to write the relations between the digits and the carries
found in Theorems \ref{fund} and \ref{fund_conv} in matrix form,
\begin{equation}
(b P_\psi-I)\textbf{c} = (nP_\sigma-I) \textbf{d},
\label{matrix}
\end{equation}
where $I$ is the identity matrix, $P_\psi$ and $P_\sigma$ are permutation matrices,
and $\textbf{c}$ and $\textbf{d}$ are column vectors containing the carries and digits, respectively.
We note that these matrices are indexed from 0 to $k$ rather than from 1.

We highlight that we will extensively use the fact   
that the number 
$(d_k,\ldots,d_0)_b$ is an $(n,b,\sigma)$-permutiple with carries
$c_k$, $\ldots$ , $c_1$, $c_0=0$ if and only if Equation \ref{matrix} holds 
(a consequence of Theorems \ref{fund} and \ref{fund_conv}).

Multiplying both sides of Equation \ref{matrix} by
$\sum_{\ell=0}^{|\sigma|-1}(nP_\sigma)^{\ell}$, we can express the digits in terms of the carries as
$\textbf{d}=\frac{1}{n^{|\sigma|}-1}\sum_{\ell=0}^{m-1}(n^{\ell} P_{\sigma^{\ell}})(b P_\psi-I)\textbf{c}.$
Similarly, multiplying both sides by
$\sum_{\ell=0}^{k}(bP_\psi)^{\ell}$, we can likewise express the carries in terms of the digits.
$\textbf{c}=\frac{1}{b^{k+1}-1}\sum_{\ell=0}^{k}(b^{\ell} P_{\psi^{\ell}})(n P_\sigma-I)\textbf{d}.$
In component form, we have the following result.
\begin{theorem} 
Let $(d_k, d_{k-1},\ldots, d_0)_b$ be an $(n,b,\sigma)$-permutiple, and let $c_j$ be the
$j$th
carry. Then
\[
d_j=\frac{1}{n^{|\sigma|}-1}\sum_{\ell=0}^{|\sigma|-1} (bc_{\psi \sigma^{\ell}(j)}-c_{\sigma^{\ell}(j)})n^{\ell}
\]
and
\[
c_j=\frac{1}{b^{k+1}-1}\sum_{\ell=0}^{k} (nd_{\sigma \psi^{\ell}(j)}-d_{\psi^{\ell}(j)})b^{\ell}
\]
for all $0\leq j \leq k$.
\label{digits_and_carries}
\end{theorem}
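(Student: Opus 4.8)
The plan is to make rigorous the computation sketched immediately before the statement. Everything rests on Equation \ref{matrix}, namely $(bP_\psi - I)\mathbf{c} = (nP_\sigma - I)\mathbf{d}$, which holds for our permutiple by Theorems \ref{fund} and \ref{fund_conv}; the two displayed formulas are obtained by inverting the two matrix factors $nP_\sigma - I$ and $bP_\psi - I$ via geometric-series telescoping. First I would pin down the conventions and facts to be used: $P_\psi$ and $P_\sigma$ are the permutation matrices acting by $(P_\tau\mathbf{v})_j = v_{\tau(j)}$, a convention forced by the shape of Theorem \ref{fund} (the $j$th row of Equation \ref{matrix} must read $b c_{\psi(j)} - c_j = n d_{\sigma(j)} - d_j$); that powers of a single permutation matrix satisfy $P_\tau^{\ell} = P_{\tau^{\ell}}$; that $\psi=(0,1,\ldots,k)$ has order $k+1$, so $P_\psi^{k+1} = I$; and that $\sigma$ has order $|\sigma|$, so $P_\sigma^{|\sigma|} = I$.

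For the first formula, left-multiply Equation \ref{matrix} by $S := \sum_{\ell=0}^{|\sigma|-1}(nP_\sigma)^{\ell}$. Since powers of a single matrix commute, the sum telescopes:
\[
S\,(nP_\sigma - I) = (nP_\sigma)^{|\sigma|} - I = n^{|\sigma|}P_\sigma^{|\sigma|} - I = (n^{|\sigma|}-1)I,
\]
using $P_\sigma^{|\sigma|} = I$. Because $n>1$ we have $n^{|\sigma|}-1>0$, so we may divide to obtain
\[
\mathbf{d} = \frac{1}{n^{|\sigma|}-1}\,S\,(bP_\psi - I)\mathbf{c} = \frac{1}{n^{|\sigma|}-1}\sum_{\ell=0}^{|\sigma|-1} n^{\ell}\,P_{\sigma^{\ell}}(bP_\psi - I)\mathbf{c}.
\]
Reading off the $j$th component and using $\big(P_{\sigma^{\ell}}(bP_\psi - I)\mathbf{c}\big)_j = b\,c_{\psi\sigma^{\ell}(j)} - c_{\sigma^{\ell}(j)}$ gives exactly the claimed expression for $d_j$.

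The second formula is the mirror image: left-multiply Equation \ref{matrix} by $T := \sum_{\ell=0}^{k}(bP_\psi)^{\ell}$, so that $T\,(bP_\psi - I) = (bP_\psi)^{k+1} - I = (b^{k+1}-1)I$ by $P_\psi^{k+1} = I$; as $b>1$, $b^{k+1}-1>0$, and dividing yields $\mathbf{c} = \frac{1}{b^{k+1}-1}\sum_{\ell=0}^{k} b^{\ell}\,P_{\psi^{\ell}}(nP_\sigma - I)\mathbf{d}$, whose $j$th component is $c_j = \frac{1}{b^{k+1}-1}\sum_{\ell=0}^{k} b^{\ell}\big(n\,d_{\sigma\psi^{\ell}(j)} - d_{\psi^{\ell}(j)}\big)$. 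The only point demanding care — and the one place where an inverse or a transposed index could slip in — is the bookkeeping of how the permutation matrices act on subscripts and in which order $\psi$ and $\sigma^{\ell}$ (respectively $\sigma$ and $\psi^{\ell}$) compose; once the convention $(P_\tau\mathbf{v})_j = v_{\tau(j)}$ is checked against Theorem \ref{fund}, the remainder is mechanical, so I do not anticipate a genuine obstacle.
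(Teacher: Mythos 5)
Your proposal is correct and follows exactly the paper's own route: the paper derives the theorem by multiplying Equation \ref{matrix} by $\sum_{\ell=0}^{|\sigma|-1}(nP_\sigma)^{\ell}$ and $\sum_{\ell=0}^{k}(bP_\psi)^{\ell}$ respectively and reading off components, which is precisely your telescoping argument. You have merely filled in the details (order of $\sigma$ and $\psi$, the indexing convention for permutation matrices) that the paper leaves implicit.
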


\begin{remark}
 We direct the reader's attention to the symmetry between the above equations expressing the digits in terms of the 
 carries and vice versa.
 The above also generalizes the relationship between palintiple numbers and their carries found 
 in Young \cite{young_1}, Sloane \cite{sloane}, and Holt \cite{holt_1,holt_3}.
\end{remark}

The next theorem places restrictions on the first digit of any permutiple.

\begin{theorem}
For any nontrivial $(n, b,\sigma)$-permutiple, $(d_k ,d_{k-1},\ldots,d_0)_b$, $\gcd(n,b)$ divides $d_0$.
\label{order_sigma}
\end{theorem}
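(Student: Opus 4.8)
The plan is to extract the relevant information from the fundamental relation of Theorem~\ref{fund} at the single index $j=0$ and then propagate it through the carry recursion. Recall that $c_0 = 0$ by definition. Setting $j=0$ in Theorem~\ref{fund} gives
\[
b c_1 - c_0 = n d_{\sigma(0)} - d_0,
\]
that is, $d_0 = n d_{\sigma(0)} - b c_1$. Reading this equation modulo $g := \gcd(n,b)$, both terms on the right-hand side vanish, so $g \mid d_0$. This already looks like a one-line argument; the only thing to check is that nothing in the setup has been overlooked.

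The subtlety I would want to be careful about is the meaning of ``nontrivial.'' The excerpt has stipulated $1 < n < b$, so $n$ and $b$ are genuine, and the carry $c_1$ is a well-defined nonnegative integer (by the single-digit multiplication algorithm, and bounded by $n-1$ via Theorem~\ref{carries}, though that bound is not needed here). The hypothesis that the permutiple is nontrivial rules out degenerate cases but does not affect the modular computation. So the core of the proof is simply: apply Theorem~\ref{fund} with $j=0$, use $c_0=0$, and reduce modulo $\gcd(n,b)$.

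I would therefore present the argument in three short steps: (1) invoke Theorem~\ref{fund} at $j=0$ together with $c_0 = 0$ to obtain $d_0 = n d_{\sigma(0)} - b c_1$; (2) observe that $\gcd(n,b)$ divides both $n d_{\sigma(0)}$ and $b c_1$; (3) conclude $\gcd(n,b) \mid d_0$. The main ``obstacle'' is really just making sure the indexing conventions ($c_0=0$, matrices and carries indexed from $0$) are applied correctly; there is no genuine difficulty, and no appeal to the more elaborate Theorem~\ref{digits_and_carries} is required. If one wished to say more — for instance, to characterize exactly which residues $d_0$ can take — one could bring in the full expression for $c_j$ in terms of the digits, but for the stated divisibility claim the elementary computation above suffices.
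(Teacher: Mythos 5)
Your proof is correct, but it takes a genuinely more elementary route than the paper's. The paper proves this by specializing the closed-form expression of Theorem \ref{digits_and_carries} to $j=0$, obtaining $(n^{|\sigma|}-1)d_0 = b\bigl(\sum_{\ell=0}^{|\sigma|-1} c_{\psi\sigma^{\ell}(0)}n^{\ell}\bigr) - n\bigl(\sum_{\ell=1}^{|\sigma|-1} c_{\sigma^{\ell}(0)}n^{\ell-1}\bigr)$ (using $c_0=0$ to drop the $\ell=0$ term and factor out $n$), and then must invoke the coprimality of $n$ and $n^{|\sigma|}-1$ to conclude that $\gcd(n,b)$ divides $d_0$ rather than just $(n^{|\sigma|}-1)d_0$. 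You instead use only the single relation of Theorem \ref{fund} at $j=0$, namely $d_0 = nd_{\sigma(0)} - bc_1$, and read it modulo $\gcd(n,b)$; this bypasses Theorem \ref{digits_and_carries} entirely and needs no coprimality argument. Your version is shorter and relies on less machinery; the paper's version gains nothing extra for this particular divisibility claim, though its starting identity does expose how $d_0$ depends on the whole carry orbit under $\sigma$, which is in the spirit of the surrounding discussion. Either way, the conclusion and its logical support are sound.
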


\begin{proof}
Let $(d_k , d_{k-1},\ldots, d_0)_b$ be an $(n, b,\sigma)$-permutiple where $c_j$ is the $j$th carry. 
Then, for $j = 0$, Theorem \ref{digits_and_carries} gives 
$
(n^{|\sigma|}-1)d_0
=b \left(\sum_{\ell=0}^{|\sigma|-1} c_{\psi \sigma^{\ell}(0)}n^{\ell}\right)-n\left(\sum_{\ell=1}^{|\sigma|-1} c_{\sigma^{\ell}(0)}n^{\ell-1}\right).
$
Thus, $\gcd(n,b)$ divides $d_0$ since $n$ and $n^{|\sigma|}-1$ are relatively prime.
\end{proof}

\section{New Permutiples from Old}

We shall now consider the problem of finding new permutiples from known examples.
The approach taken here, as stated in the beginning,
will be to restrict our attention to finding new permutiples having the same digits as our known example.
The ultimate aim of our effort is to answer the question of whether or not all permutiples having the same digits
can be found from a single example. If not, are there conditions under which it is possible?
The following results give us some methods for constructing new permutiples from old.

\begin{theorem}
Let $(d_k,\ldots,d_0)_b$ be an $(n,b,\sigma)$-permutiple with carries $c_k$, $\ldots$ , $c_0$, and let 
$\mu$ be a permutation such that $c_{\mu(0)}=0$. 
Then $(d_{\pi(k)}, d_{\pi(k-1)},\ldots, d_{\pi(0)})_b$ is an $(n,b,\pi^{-1}\sigma \pi)$-permutiple
with carries $c_{\mu(k)}$, $c_{\mu(k-1)}$, $\ldots$ , $c_{\mu(0)}$ if and only if
$P_{\pi}(bP_{\psi}-I)\textbf{c}=(bP_{\psi}-I)P_{\mu}\textbf{c}$.
\label{new_perm}
\end{theorem}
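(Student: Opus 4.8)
The plan is to exploit the ``if and only if'' characterization highlighted after Equation~\ref{matrix}: the tuple $(d_{\pi(k)},\ldots,d_{\pi(0)})_b$ is an $(n,b,\pi^{-1}\sigma\pi)$-permutiple with carry vector $P_\mu\mathbf{c}$ precisely when that pair of vectors satisfies the matrix relation
\[
(bP_\psi - I)(P_\mu\mathbf{c}) = (nP_{\pi^{-1}\sigma\pi} - I)(P_\pi\mathbf{d}),
\]
together with the side condition that the $0$th entry of the new carry vector vanishes, i.e.\ $(P_\mu\mathbf{c})_0 = c_{\mu(0)} = 0$, which is given by hypothesis. So the whole statement reduces to showing that the displayed matrix equation is equivalent to $P_\pi(bP_\psi - I)\mathbf{c} = (bP_\psi - I)P_\mu\mathbf{c}$.

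First I would rewrite the right-hand side $(nP_{\pi^{-1}\sigma\pi} - I)P_\pi\mathbf{d}$. Using $P_{\pi^{-1}\sigma\pi} = P_\pi^{-1}P_\sigma P_\pi$ (a standard fact about permutation matrices, with the convention the paper's notation fixes) one gets $(nP_\pi^{-1}P_\sigma P_\pi - I)P_\pi\mathbf{d} = P_\pi^{-1}(nP_\sigma - I)P_\pi P_\pi\mathbf{d}$ --- wait, more carefully: $(nP_\pi^{-1}P_\sigma P_\pi - I)P_\pi = P_\pi^{-1}(nP_\sigma - I)P_\pi^{\phantom{1}}P_\pi$ is not quite it, so I would instead factor as $nP_\pi^{-1}P_\sigma P_\pi P_\pi\mathbf{d} - P_\pi\mathbf{d}$ and pull $P_\pi^{-1}$ out front only after noting $I = P_\pi^{-1}P_\pi$, giving $P_\pi^{-1}\bigl(nP_\sigma P_\pi P_\pi - P_\pi P_\pi\bigr)\mathbf{d}$. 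The cleaner route, which I would actually use, is to left-multiply the target equation $(bP_\psi - I)P_\mu\mathbf{c} = (nP_{\pi^{-1}\sigma\pi} - I)P_\pi\mathbf{d}$ by $P_\pi$ and simplify $P_\pi(nP_{\pi^{-1}\sigma\pi} - I) = (nP_\sigma - I)P_\pi$, so the right-hand side becomes $(nP_\sigma - I)P_\pi P_\pi\mathbf{d}$. That still leaves a stray $P_\pi$ acting on $\mathbf d$, which signals that the correct bookkeeping is to recognize $(nP_{\pi^{-1}\sigma\pi}-I)P_\pi\mathbf d = P_\pi (nP_\sigma - I)\mathbf d$ by conjugation, and then invoke Equation~\ref{matrix} for the \emph{original} permutiple, $(nP_\sigma - I)\mathbf d = (bP_\psi - I)\mathbf c$, to replace the right-hand side by $P_\pi(bP_\psi - I)\mathbf c$. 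Hence the new permutiple condition is exactly $(bP_\psi - I)P_\mu\mathbf c = P_\pi(bP_\psi - I)\mathbf c$, which is the claimed criterion.

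For the converse direction I would simply run the same chain of equalities backwards: starting from $P_\pi(bP_\psi - I)\mathbf c = (bP_\psi - I)P_\mu\mathbf c$, use Equation~\ref{matrix} for $(d_k,\ldots,d_0)_b$ to rewrite the left side as $P_\pi(nP_\sigma - I)\mathbf d = (nP_{\pi^{-1}\sigma\pi} - I)P_\pi\mathbf d$, concluding that $(bP_\psi - I)(P_\mu\mathbf c) = (nP_{\pi^{-1}\sigma\pi} - I)(P_\pi\mathbf d)$; combined with the hypothesis $c_{\mu(0)} = 0$, the converse half of the characterization after Equation~\ref{matrix} (i.e.\ Theorem~\ref{fund_conv} applied to the conjugate permutation) yields that $(d_{\pi(k)},\ldots,d_{\pi(0)})_b$ is the asserted permutiple with carries $c_{\mu(k)},\ldots,c_{\mu(0)}$. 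The main obstacle, and the only place requiring genuine care, is the index bookkeeping in the conjugation identities $P_{\pi^{-1}\sigma\pi} = P_\pi^{-1}P_\sigma P_\pi$ and the commutation $P_\pi(nP_\sigma - I) = (nP_{\pi\sigma\pi^{-1}} - I)P_\pi$: one has to pin down whether the paper's convention makes $(P_\pi\mathbf v)_j = v_{\pi(j)}$ or $v_{\pi^{-1}(j)}$, and then check that the permutation appearing in the new permutiple's type is indeed $\pi^{-1}\sigma\pi$ rather than its inverse. Everything else is a direct substitution using results already established.
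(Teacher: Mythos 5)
Your proposal is correct and follows essentially the same route as the paper: both directions reduce to the chain $(nP_{\pi^{-1}\sigma\pi}-I)P_\pi\mathbf{d}=P_\pi(nP_\sigma-I)\mathbf{d}=P_\pi(bP_\psi-I)\mathbf{c}$ together with the characterization of permutiples via Equation~\ref{matrix}. Your explicit mention that the hypothesis $c_{\mu(0)}=0$ is what licenses the appeal to Theorem~\ref{fund_conv} in the converse direction is a point the paper leaves implicit, but the argument is the same.
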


\begin{proof}
By our hypothesis, Equation \ref{matrix} is satisfied.  
Thus, if $(d_{\pi(k)}, d_{\pi(k-1)},\ldots, d_{\pi(0)})_b$ is an $(n,b,\pi^{-1}\sigma \pi)$-permutiple
with carries $c_{\mu(k)}$, $c_{\mu(k-1)}$, $\ldots$ , $c_{\mu(0)}$, then
\[P_{\pi}(b P_{\psi}-I)\textbf{c}
=P_{\pi}(nP_{\sigma}-I)\textbf{d}
=(nP_{\pi^{-1}\sigma \pi}-I)P_{\pi}\textbf{d}
=(b P_{\psi}-I)P_{\mu}\textbf{c}.\]
Conversely, if $P_{\pi}(bP_{\psi}-I)\textbf{c}=(bP_{\psi}-I)P_{\mu}\textbf{c}$, then
\[(nP_{\pi^{-1}\sigma \pi}-I)P_{\pi}\textbf{d}
=P_{\pi}(nP_{\sigma}-I)\textbf{d}
=P_{\pi}(b P_{\psi}-I)\textbf{c}
=(b P_{\psi}-I)P_{\mu}\textbf{c}.\]
\end{proof}

\begin{corollary}
Let $(d_k, d_{k-1},\ldots, d_0)_b$ be an $(n,b,\sigma)$-permutiple with carries $c_k$, $c_{k-1}$, $\ldots$ , $c_0$. 
If $c_j=0$, then $(d_{\psi^j(k)}, d_{\psi^{j}(k-1)},\ldots, d_{\psi^j(1)}, d_{\psi^j(0)})_b$
is an $(n,b,\psi^{-j} \sigma \psi^j)$-permutiple with carries 
$c_{\psi^j(k)}$, $c_{\psi^{j}(k-1)}$, $\ldots$ , $c_{\psi^j(1)}$, $c_{\psi^j(0)}=c_j=0$.
\label{zero_carry}
\end{corollary}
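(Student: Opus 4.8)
The plan is to deduce this immediately from Theorem~\ref{new_perm} by taking $\pi=\mu=\psi^{j}$. First I would record that, since $\psi$ is the $(k+1)$-cycle $(0,1,2,\ldots,k)$, we have $\psi^{j}(0)=j$, so the standing hypothesis of Theorem~\ref{new_perm} that $c_{\mu(0)}=0$ is precisely the assumption $c_{j}=0$. With this choice the permutation $\pi^{-1}\sigma\pi$ in the conclusion of Theorem~\ref{new_perm} is $\psi^{-j}\sigma\psi^{j}$, and the carry sequence $c_{\mu(k)},\ldots,c_{\mu(0)}$ is $c_{\psi^{j}(k)},\ldots,c_{\psi^{j}(0)}$ with final term $c_{\psi^{j}(0)}=c_{j}=0$, exactly as stated.

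It then remains only to verify the matrix condition $P_{\pi}(bP_{\psi}-I)\textbf{c}=(bP_{\psi}-I)P_{\mu}\textbf{c}$ of Theorem~\ref{new_perm} for $\pi=\mu=\psi^{j}$. This holds as an identity of matrices, independent of $\textbf{c}$: $P_{\psi^{j}}=(P_{\psi})^{j}$ is a power of $P_{\psi}$, hence commutes with $P_{\psi}$ and with $I$, so
\[
P_{\psi^{j}}(bP_{\psi}-I)=bP_{\psi^{j}}P_{\psi}-P_{\psi^{j}}=bP_{\psi}P_{\psi^{j}}-P_{\psi^{j}}=(bP_{\psi}-I)P_{\psi^{j}}.
\]
Feeding $\pi=\mu=\psi^{j}$ into Theorem~\ref{new_perm} now yields the corollary.

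The hard part is essentially nonexistent; the only point requiring any care is the bookkeeping that $P_{\psi^{j}}=(P_{\psi})^{j}$ and that powers of a single permutation matrix commute, which is what makes the hypothesis of Theorem~\ref{new_perm} automatically satisfied. If a self-contained argument were wanted, one could instead conjugate Equation~\ref{matrix} on the left by $P_{\psi^{j}}$, use the commutation above together with the identity $P_{\psi^{j}}(nP_{\sigma}-I)=(nP_{\psi^{-j}\sigma\psi^{j}}-I)P_{\psi^{j}}$ (the $\pi=\psi^{j}$ instance of a step already appearing in the proof of Theorem~\ref{new_perm}), and read off that Equation~\ref{matrix} holds for the digit vector $P_{\psi^{j}}\textbf{d}$, the carry vector $P_{\psi^{j}}\textbf{c}$, and the permutation $\psi^{-j}\sigma\psi^{j}$; by the equivalence noted just after Equation~\ref{matrix}, that is the claim. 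Invoking Theorem~\ref{new_perm} is the cleaner of the two routes.
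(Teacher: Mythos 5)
Your proof is correct and follows exactly the route the paper itself indicates: the remark after the corollary states that it follows from Theorem~\ref{new_perm} by setting $\pi=\mu=\psi^{j}$, and you have simply filled in the (easy) verification that the matrix condition holds because powers of $P_{\psi}$ commute. No discrepancies.
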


\begin{remark}
 We note that the above corollary follows either from Theorem \ref{digits_and_carries} by setting $c_j=0$,
 or from Theorem \ref{new_perm} by setting $\pi=\mu=\psi^{j}$.
\end{remark}

\begin{example}
 Consider the $(4,10,\rho)$-permutiple $(8,7,9,1,2)_{10}=4 \cdot (2,1,9,7,8)_{10}$. 
 Performing routine multiplication, we see that the carries are $(c_4,c_3,c_2,c_1,c_0)=(0,3,3,3,0)$. 
 Not surprisingly, applying Corollary \ref{zero_carry} to $j=0$ yields the original permutiple.
 However, for the case of $j=4$, we have $(7,9,1,2,8)_{10}=4 \cdot (1,9,7,8,2)_{10}$ with carries
$(c_{\psi^4(4)},c_{\psi^4(3)},c_{\psi^4(2)},c_{\psi^4(1)},c_{\psi^4(0)})
=(c_{3},c_{2},c_{1},c_{0},c_{4})
=(3,3,3,0,0)$.
\end{example}

Applying Corollary \ref{zero_carry} a bit more generally, 
if $(d_k, d_{k-1},\ldots, d_0)_b$ is any $(n,b)$-palintiple such that $c_k=0$ 
(this includes all symmetric, doubly-derived, and doubly-reverse-derived palintiples \cite{holt_3}), then
$(d_{k-1}, d_{k-2}, \ldots, d_{0}, d_k)_b$ is an $(n,b,\psi^{-k} \rho \psi^{k})$-permutiple.

Setting $\mu$ to the identity permutation in Theorem \ref{new_perm}, we obtain another useful corollary.

\begin{corollary}
 Let $(d_k, d_{k-1},\ldots, d_0)_b$ be an $(n,b,\sigma)$-permutiple with carries $c_k$, $c_{k-1}$, $\ldots$ , $c_0$.
 Then $(d_{\pi(k)}, d_{\pi(k-1)},\ldots, d_{\pi(0)})_b$ is an $(n,b,\pi^{-1}\sigma \pi)$-permutiple
with carries $c_k$, $c_{k-1}$, $\ldots$ , $c_0$ if and only if $P_{\pi}(bP_{\psi}-I)\textbf{c}=(bP_{\psi}-I)\textbf{c}.$
\label{new_perm_id}
\end{corollary}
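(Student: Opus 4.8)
The plan is to obtain this as the immediate specialization of Theorem~\ref{new_perm} in which $\mu$ is taken to be the identity permutation $\varepsilon$. Recall that, by the single-digit multiplication algorithm defining the carries, every $(n,b,\sigma)$-permutiple has $c_0=0$. Consequently, with the choice $\mu=\varepsilon$ the standing hypothesis of Theorem~\ref{new_perm} that $c_{\mu(0)}=0$ reads $c_0=0$ and is therefore automatically satisfied, so Theorem~\ref{new_perm} applies verbatim with this $\mu$.

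It then remains only to transcribe the conclusion of Theorem~\ref{new_perm} in the case $\mu=\varepsilon$. Since $P_\varepsilon=I$, the biconditional matrix equation $P_{\pi}(bP_{\psi}-I)\textbf{c}=(bP_{\psi}-I)P_{\mu}\textbf{c}$ collapses to $P_{\pi}(bP_{\psi}-I)\textbf{c}=(bP_{\psi}-I)\textbf{c}$, and the associated carry sequence $c_{\mu(k)},c_{\mu(k-1)},\ldots,c_{\mu(0)}$ is simply $c_k,c_{k-1},\ldots,c_0$. Making these substitutions in the statement of Theorem~\ref{new_perm} produces exactly the claim of the corollary. There is no substantive obstacle in this argument; the only point requiring verification is that $c_0=0$ holds for every permutiple, which is guaranteed by the definition of the carries, and this is precisely what makes the choice $\mu=\varepsilon$ legitimate.
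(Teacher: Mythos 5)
Your proposal is correct and matches the paper exactly: the paper introduces this corollary with the single remark that it follows by setting $\mu$ to the identity in Theorem~\ref{new_perm}, which is precisely your argument. Your additional check that $c_{\mu(0)}=c_0=0$ holds automatically is a sensible (if routine) verification of the hypothesis.
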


\begin{example}
 Consider again the base-10 palintiple $(8,7,9,1,2)_{10}=4 \cdot (2,1,9,7,8)_{10}$
 with carries $(c_4,c_3,c_2,c_1,c_0)=(0,3,3,3,0)$.
With the above in mind, we calculate
\[
 (10P_{\psi}-I)\textbf{c}=
 \left(
 10 \cdot
 \left[
 \begin{matrix}
 0 & 1 & 0 & 0 & 0\\
 0 & 0 & 1 & 0 & 0\\
 0 & 0 & 0 & 1 & 0\\
 0 & 0 & 0 & 0 & 1\\
 1 & 0 & 0 & 0 & 0\\
 \end{matrix}
 \right]
 -
 \left[
 \begin{matrix}
 1 & 0 & 0 & 0 & 0\\
 0 & 1 & 0 & 0 & 0\\
 0 & 0 & 1 & 0 & 0\\
 0 & 0 & 0 & 1 & 0\\
 0 & 0 & 0 & 0 & 1\\
 \end{matrix}
 \right] 
 \right)
 \left[
 \begin{matrix}
    0\\
   3\\
   3\\
   3\\
   0\\
 \end{matrix}
\right] 
 =
 \left[
 \begin{matrix}
   30\\
   27\\
   27\\
   -3\\
    0\\
 \end{matrix}
\right].
\]

Since the above column vector is unchanged by $P_{\pi}$, where $\pi$ is the transposition $(1,2)$, 
we see by Corollary \ref{new_perm_id} that $(8,7,1,9,2)_{10}$ is a $(4,10, (1,2) \rho (1,2))$-permutiple 
with carries $(c_4,c_3,c_2,c_1,c_0)=(0,3,3,3,0)$ which may also be be confirmed by simple arithmetic.

Performing the same calculation as above, 
the $(4,10,\psi^{-4}\rho\psi^{4})$-permutiple, $(7, 9, 1, 2, 8)_{10}=4 \cdot (1, 9, 7, 8, 2)_{10}$, 
from the previous example yields via Corollary \ref{new_perm_id} the $(4,10,((2,3)\psi^{-4}\rho \psi^4 (2,3))$-permutiple 
$(7, 1, 9, 2, 8)_{10}=4 \cdot (1, 7, 9, 8, 2)_{10}$ with carries $(c_4,c_3,c_2,c_1,c_0)=(3,3,3,0,0)$.
We note that we arrive at the same result by applying Corollary \ref{zero_carry}
to the $(4,10, (1,2) \rho (1,2))$-permutiple $(8,7,1,9,2)_{10}$ which is affirmed by the fact that
$(1,2)\psi^4=\psi^4 (2,3)$. 
\end{example} 

At this point, several questions naturally present themselves. 
Can all permutiples having a particular set of digits be found by repeated use of Theorem \ref{new_perm} and its corollaries?
One does not have to look far to see that the answer is no. If we consider the example
$(7, 8, 9, 1, 2)_{10} =4  \cdot (1, 9, 7, 2, 8)_{10}$ with carries $(c_4,c_3,c_2,c_1,c_0)=(3,2,1,3,0)$,
we see that the results obtained thus far do not account for this example since the carries are different.

Another question is if we have an $(n,b,\sigma)$-permutiple, $(d_k, \ldots, d_0)_b$, with carries $c_k$, $\ldots$ , $c_0$, and
an $(n,b,\tau)$-permutiple, $(d_{\pi(k)}, \ldots, d_{\pi(0)})_b$, with permuted carries 
$c_{\mu(k)}$, $c_{\mu(k-1)}$, $\ldots$ , $c_{\mu(0)}$, must it be that $\tau = \pi^{-1} \sigma \pi$?
Again, with a little effort, we can find an example which shows that this is not always the case. 
Consider $(4, 3, 5, 1, 2)_6=2\cdot(2, 1, 5, 3, 4)_6$, a $(2,6,\sigma)$-permutiple, and 
$(2, 5, 1, 3, 4)_6 = 2 \cdot (1, 2, 3, 4, 5)_6$, a $(2,6,\tau)$-permutiple,
both with the same carry vector $(0,1,1,1,0)$.
Now $\sigma=(0,4)(1,3)$, $\pi=(0,4)(3,2,1)$, and $\tau=(0,3,4,2,1)$, but $\tau\neq \pi^{-1}\sigma\pi$.

Thus, it is clear that our results so far do not account for every possibility. 
Therefore, we shall require some additional machinery in order to find every permutiple with the same digits
as our known example. Again, for the purpose of less cumbersome theorem statements, we shall henceforth 
assume that $(d_k, \ldots, d_0)_b$ is an $(n,b,\sigma)$-permutiple.
We begin with a definition, motivated by the above, which will help us to organize and classify our new examples.

\begin{definition}
 We say that an $(n,b, \tau_1)$-permutiple, $(d_{\pi_1(k)}, d_{\pi_1(k-1)},\ldots, d_{\pi_1(0)})_b$, and
 an $(n,b, \tau_2)$-permutiple, $(d_{\pi_2(k)}, d_{\pi_2(k-1)},\ldots, d_{\pi_2(0)})_b$,
  are \textit{conjugate} if $\pi_1 \tau_1 \pi_1^{-1}=\pi_2 \tau_2 \pi_2^{-1}$.
\end{definition}

Clearly, permutiple conjugacy defines an equivalence relation on the collection of all base-$b$ 
permutiples with multiplier $n$ having the same digits.
From this fact, we need to establish some additional terminology. For any
two  $(n,b, \tau_1)$ and $(n,b, \tau_2)$-permutiples
of the same conjugacy class,
$(d_{\pi_1(k)}, d_{\pi_1(k-1)},\ldots, d_{\pi_1(0)})_b$  and $(d_{\pi_2(k)}, d_{\pi_2(k-1)},\ldots, d_{\pi_2(0)})_b$,
we shall refer to the common permutation $\beta=\pi_1 \tau_1 \pi_1^{-1}=\pi_2 \tau_2 \pi_2^{-1}$
as the \textit{base permutation} of the class. 
We emphasize that the base permutation of a conjugacy class might not necessarily be a digit permutation itself. 

Our next result tells us that two permutiples in the same conjugacy class both have the same set of carries. 

\begin{theorem}
Let
$(d_{\pi_1(k)}, d_{\pi_1(k-1)},\ldots, d_{\pi_1(0)})_b$  and $(d_{\pi_2(k)}, d_{\pi_2(k-1)},\ldots, d_{\pi_2(0)})_b$
be $(n,b, \tau_1)$ and $(n,b, \tau_2)$-permutiples, respectively, from the same conjugacy class
with carries given by $c_k$, $c_{k-1}$, $\ldots$ , $c_0$
 and $\hat{c}_k$, $\hat{c}_{k-1}$, $\ldots$ , $\hat{c}_0$, respectively.
 Then $\hat{c}_j=c_{\pi_1^{-1}\pi_2(j)}$ for all $0 \leq j \leq k$.
 \label{conj_class_carries}
\end{theorem}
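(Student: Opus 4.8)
The plan is to write Equation~\ref{matrix} once for each of the two permutiples and to combine the two instances using only the conjugacy hypothesis, after which the statement reduces to a size estimate on the carries.

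Write $\mathbf{c}$ and $\hat{\mathbf{c}}$ for the carry vectors of the two permutiples. Their digit vectors are $P_{\pi_1}\mathbf{d}$ and $P_{\pi_2}\mathbf{d}$, so Equation~\ref{matrix} reads
\[
(bP_\psi-I)\mathbf{c}=(nP_{\tau_1}-I)P_{\pi_1}\mathbf{d},\qquad (bP_\psi-I)\hat{\mathbf{c}}=(nP_{\tau_2}-I)P_{\pi_2}\mathbf{d}.
\]
Set $\rho=\pi_1^{-1}\pi_2$, so that the digit vector $P_{\pi_2}\mathbf{d}$ of the second permutiple is obtained from that of the first by applying $P_\rho$, i.e.\ $P_{\pi_2}\mathbf{d}=P_\rho P_{\pi_1}\mathbf{d}$; moreover the conjugacy relation $\pi_1\tau_1\pi_1^{-1}=\pi_2\tau_2\pi_2^{-1}$ is exactly $\tau_2=\rho^{-1}\tau_1\rho$. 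Feeding these into the second equation and invoking the identity $(nP_{\rho^{-1}\tau_1\rho}-I)P_\rho=P_\rho(nP_{\tau_1}-I)$ — the same permutation–matrix manipulation performed in the proof of Theorem~\ref{new_perm} — together with the first equation, I get
\[
(bP_\psi-I)\hat{\mathbf{c}}=(nP_{\tau_2}-I)P_\rho P_{\pi_1}\mathbf{d}=P_\rho(nP_{\tau_1}-I)P_{\pi_1}\mathbf{d}=P_\rho(bP_\psi-I)\mathbf{c}.
\]

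Now I read this off coordinatewise. Since $P_\psi$ shifts indices by the cycle $\psi=(0,1,\ldots,k)$, the $j$th coordinates give, for every $j$, the identity $b\hat{c}_{\psi(j)}-\hat{c}_j=bc_{\psi(\rho(j))}-c_{\rho(j)}$. Putting $\delta_i=\hat{c}_i-c_{\rho(i)}$ and substituting $\hat{c}_{\psi(j)}=\delta_{\psi(j)}+c_{\rho(\psi(j))}$ and $\hat{c}_j=\delta_j+c_{\rho(j)}$, this rearranges to $\delta_j=b\bigl(\delta_{\psi(j)}+c_{\rho(\psi(j))}-c_{\psi(\rho(j))}\bigr)$, so $b$ divides $\delta_j$. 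On the other hand, $\hat{c}_j$ and $c_{\rho(j)}$ are carries of permutiples, hence nonnegative and, by Theorem~\ref{carries}, at most $n-1$; therefore $|\delta_j|\le n-1<b$. Consequently $\delta_j=0$ for every $j$, that is $\hat{c}_j=c_{\rho(j)}=c_{\pi_1^{-1}\pi_2(j)}$, which is the claim.

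The only step calling for care is the last one. It is tempting to try to move $P_\rho$ past $bP_\psi-I$ in the displayed equation and conclude at once that $\hat{\mathbf{c}}=P_\rho\mathbf{c}$, but $P_\rho$ and $P_\psi$ do not commute unless $\rho$ is a power of $\psi$, so this is not available. The point is rather not to commute anything: the discrepancy vector $\delta=\hat{\mathbf{c}}-P_\rho\mathbf{c}$ is forced by the equation to have every entry divisible by $b$, while, being a difference of two genuine carry sequences, it has every entry of absolute value at most $n-1$. This is precisely where the standing hypothesis $n<b$ is used; everything else is the bookkeeping already set up for Theorem~\ref{new_perm}.
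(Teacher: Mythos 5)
Your proof is correct and follows essentially the same route as the paper's: both derive the identity $(bP_\psi-I)\hat{\textbf{c}}=P_{\pi_1^{-1}\pi_2}(bP_\psi-I)\textbf{c}$ from the two instances of Equation~\ref{matrix} plus conjugacy, then conclude by reducing modulo $b$ and invoking the bound $c_j\leq n-1<b$ from Theorem~\ref{carries}. Your explicit discrepancy vector $\delta=\hat{\textbf{c}}-P_{\pi_1^{-1}\pi_2}\textbf{c}$ is just a spelled-out version of the paper's ``reducing modulo $b$'' step.
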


\begin{proof}
 By assumption we have both that $(nP_{\tau_1}-I)P_{\pi_1}\textbf{d}=(bP_{\psi}-I)\textbf{c}$ and 
 that $(nP_{\tau_2}-I)P_{\pi_2}\textbf{d}=(bP_{\psi}-I)\hat{\textbf{c}}$.
 Then $P_{\pi_1}(nP_{\pi_1\tau_1\pi_1^{-1}}-I)\textbf{d}=(bP_{\psi}-I)\textbf{c}$ and 
 $P_{\pi_2}(nP_{\pi_2\tau_2\pi_2^{-1}}-I)\textbf{d}=(bP_{\psi}-I)\hat{\textbf{c}}$.
 Since both permutiples are conjugate we have that $P_{\pi_1 \tau_1 \pi_1^{-1}}=P_{\pi_2 \tau_2 \pi_2^{-1}}$.
 It follows that $P_{\pi_1^{-1}}(bP_{\psi}-I)\textbf{c}=P_{\pi_2^{-1}}(bP_{\psi}-I)\hat{\textbf{c}}$.
 Reducing modulo $b$ we have $P_{\pi_1^{-1}}\textbf{c} \equiv P_{\pi_2^{-1}}\hat{\textbf{c}} \mod b$, or
 $P_{\pi_1^{-1}\pi_2}\textbf{c} \equiv \hat{\textbf{c}} \mod b$. Theorem \ref{carries} then implies that
 $\hat{\textbf{c}}=P_{\pi_1^{-1}\pi_2}\textbf{c}$.
\end{proof}

The above theorem gives us the following important result.

\begin{theorem}
Let $p=(d_{\pi_1(k)}, d_{\pi_1(k-1)},\ldots, d_{\pi_1(0)})_b$ be an
$(n,b, \tau_1)$-permutiple with carries $c_k$, $c_{k-1}$, $\ldots$ , $c_0$.
If $(d_{\pi_2(k)}, d_{\pi_2(k-1)},\ldots, d_{\pi_2(0)})_b$ is an $(n,b, \tau_2)$-permutiple
from the same conjugacy class as $p$,
 then $c_{\psi \pi_1^{-1}\pi_2(j)}=c_{\pi_1^{-1}\pi_2\psi(j)}$ for all $0 \leq j \leq k$.
\label{psi_commute}
\end{theorem}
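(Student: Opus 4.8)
The plan is to push everything up to the matrix form in Equation~\ref{matrix} and only pass to components at the very end. Write $q=(d_{\pi_2(k)},\ldots,d_{\pi_2(0)})_b$ for the second permutiple and $\hat{\textbf{c}}$ for its carry vector, put $\textbf{d}=(d_k,\ldots,d_0)^{T}$, and set $\gamma=\pi_1^{-1}\pi_2$ and $\beta=\pi_1\tau_1\pi_1^{-1}=\pi_2\tau_2\pi_2^{-1}$ (the common base permutation, which exists since $p$ and $q$ are conjugate). The digit vectors of $p$ and $q$ are $P_{\pi_1}\textbf{d}$ and $P_{\pi_2}\textbf{d}$, so Equation~\ref{matrix} applied to each yields
\[
(bP_\psi-I)\textbf{c}=(nP_{\tau_1}-I)P_{\pi_1}\textbf{d}\qquad\text{and}\qquad(bP_\psi-I)\hat{\textbf{c}}=(nP_{\tau_2}-I)P_{\pi_2}\textbf{d}.
\]
Multiplying the first on the left by $P_{\pi_1^{-1}}$ and the second by $P_{\pi_2^{-1}}$ and using the conjugation identity $P_{\pi_i^{-1}}(nP_{\tau_i}-I)P_{\pi_i}=nP_\beta-I$ (exactly the rewriting performed in the proof of Theorem~\ref{conj_class_carries}) makes both right-hand sides equal to $(nP_\beta-I)\textbf{d}$, so that $P_{\pi_1^{-1}}(bP_\psi-I)\textbf{c}=P_{\pi_2^{-1}}(bP_\psi-I)\hat{\textbf{c}}$.

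Next I would substitute $\hat{\textbf{c}}=P_{\pi_1^{-1}\pi_2}\textbf{c}=P_\gamma\textbf{c}$, which is exactly the conclusion of Theorem~\ref{conj_class_carries}. Expanding both sides of $P_{\pi_1^{-1}}(bP_\psi-I)\textbf{c}=P_{\pi_2^{-1}}(bP_\psi-I)P_\gamma\textbf{c}$, the two ``$-I$'' terms are $P_{\pi_1^{-1}}\textbf{c}$ and $P_{\pi_2^{-1}}P_\gamma\textbf{c}$, and since $P_{\pi_2^{-1}}P_\gamma=P_{\pi_1^{-1}}$ they cancel; dividing the surviving ``$bP_\psi$'' terms by $b$ leaves
\[
P_{\pi_1^{-1}}P_\psi\textbf{c}=P_{\pi_2^{-1}}P_\psi P_\gamma\textbf{c}.
\]
Reading off the $j$th component of each side, with the convention $(P_\mu\textbf{v})_j=v_{\mu(j)}$ in force throughout the paper, gives $c_{\psi\pi_1^{-1}(j)}=c_{\gamma\psi\pi_2^{-1}(j)}$ for all $j$; replacing $j$ by $\pi_2(j)$ and recalling $\gamma=\pi_1^{-1}\pi_2$ turns this into $c_{\psi\pi_1^{-1}\pi_2(j)}=c_{\pi_1^{-1}\pi_2\psi(j)}$, which is the claim.

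The permutation-matrix bookkeeping — in particular the conjugation identity above and the relation $P_{\pi_2^{-1}}P_\gamma=P_{\pi_1^{-1}}$ — is routine and follows immediately from how $P_{(\cdot)}$ behaves on products of permutations. The one genuine point, and the place to be careful, is the cancellation of the ``$-I$'' terms: this is precisely what Theorem~\ref{conj_class_carries} supplies, since without knowing that $q$'s carries are the exact relabeling $P_\gamma\textbf{c}$ of $p$'s, the two equations would only force agreement modulo $b$, which is not enough. Once the relabeling is in hand, the cyclic shift $P_\psi$ rides along for free and the stated identity drops out of reading off coordinates.
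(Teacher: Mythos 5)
Your proof is correct and follows essentially the same route as the paper's: both pass to the matrix form of Equation \ref{matrix}, invoke Theorem \ref{conj_class_carries} to write the second carry vector as $P_{\pi_1^{-1}\pi_2}\textbf{c}$, use conjugacy to identify the two sides $(nP_{\pi_i\tau_i\pi_i^{-1}}-I)\textbf{d}$, and then cancel down to the commutation identity $P_{\pi_1^{-1}\pi_2}P_\psi\textbf{c}=P_\psi P_{\pi_1^{-1}\pi_2}\textbf{c}$ before reading off components. The only difference is cosmetic bookkeeping (you cancel the $-I$ terms and divide by $b$ directly rather than first restoring the form $(bP_\psi-I)\textbf{c}=P_{\pi_1^{-1}\pi_2}^{-1}(bP_\psi-I)P_{\pi_1^{-1}\pi_2}\textbf{c}$), so no substantive comparison is needed.
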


\begin{proof}
 Our first assumption in matrix form is 
 $(n P_{\tau_1}-I)P_{\pi_1}\textbf{d}=(bP_{\psi}-I)\textbf{c}$,
 and by Theorem \ref{conj_class_carries}, our second assumption becomes
  $(n P_{\tau_2}-I)P_{\pi_2}\textbf{d}=(bP_{\psi}-I)P_{\pi_1^{-1}\pi_2}\textbf{c}$.
  Using this second equation, we have
  $(n P_{\pi_2\tau_2\pi_2^{-1}}-I)\textbf{d}=P_{\pi_2^{-1}}(n P_{\tau_2}-I)P_{\pi_2}\textbf{d}
  =P_{\pi_2^{-1}}(bP_{\psi}-I)P_{\pi_1^{-1}\pi_2}\textbf{c}$, which by conjugacy
  gives  $(n P_{\pi_1\tau_1\pi_1^{-1}}-I)\textbf{d}
  =P_{\pi_2^{-1}}(bP_{\psi}-I)P_{\pi_1^{-1}\pi_2}\textbf{c}$.
  Multiplying by $P_{\pi_1}$, we then have
  $(n P_{\tau_1}-I)P_{\pi_1}\textbf{d}
  =P_{\pi_1} P_{\pi_2^{-1}}(bP_{\psi}-I)P_{\pi_1^{-1}\pi_2}\textbf{c}$,
  which by the first relation above becomes
  $(b P_{\psi}-I)\textbf{c}
  =P_{\pi_1^{-1}\pi_2}^{-1}(bP_{\psi}-I)P_{\pi_1^{-1}\pi_2}\textbf{c}$.
  The above reduces to $P_{\pi_1^{-1}\pi_2} P_{\psi}\textbf{c}
  =P_{\psi}P_{\pi_1^{-1}\pi_2}\textbf{c}$, and the proof is complete.
\end{proof}

With the above theorem, we may determine a list of candidate permutations, $\pi$, within a particular conjugacy class.
The next theorem tells us that every item on this list yields a permutiple.

\begin{theorem}
Let $p=(d_{\pi_1(k)}, d_{\pi_1(k-1)},\ldots, d_{\pi_1(0)})_b$ be an
$(n,b, \tau_1)$-permutiple with carries $c_k$, $c_{k-1}$, $\ldots$ , $c_0$.
If $\pi_2$ is a permutation such that $c_{\pi_1^{-1}\pi_2(0)}=0$, and $c_{\psi \pi_1^{-1}\pi_2(j)}=c_{\pi_1^{-1}\pi_2 \psi(j)}$ 
for all $0 \leq j \leq k$, then 
$(d_{\pi_2(k)}, d_{\pi_2(k-1)},\ldots, d_{\pi_2(0)})_b$ is an $(n,b, \tau_2)$-permutiple
from the same conjugacy class as $p$, where $\tau_2=\pi_2^{-1}\pi_1\tau_1\pi_1^{-1}\pi_2$.
\label{psi_commute_2}
\end{theorem}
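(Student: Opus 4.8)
The plan is to verify directly that the two matrix conditions on $\pi_2$ are exactly what is needed to run the ``if and only if'' from Theorem~\ref{new_perm}, except now anchored at the permutiple $p$ rather than at the original $(n,b,\sigma)$-permutiple. First I would set $\nu=\pi_1^{-1}\pi_2$, so that the hypothesis $c_{\psi\nu(j)}=c_{\nu\psi(j)}$ for all $j$ becomes the statement $P_{\nu}P_{\psi}\mathbf{c}=P_{\psi}P_{\nu}\mathbf{c}$, i.e.\ $P_{\nu}$ commutes with $P_{\psi}$ on $\mathbf{c}$; equivalently $P_{\nu}(bP_{\psi}-I)\mathbf{c}=(bP_{\psi}-I)P_{\nu}\mathbf{c}$. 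This is the permuted-carry analogue of the condition appearing in Theorem~\ref{new_perm}, and it is the key algebraic fact I will feed in.

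Next I would translate the conclusion into matrix form and check it. Starting from the hypothesis that $p$ is an $(n,b,\tau_1)$-permutiple with carries $\mathbf c$, Theorem~\ref{fund} (in the form of Equation~\ref{matrix} applied to $p$) gives $(nP_{\tau_1}-I)P_{\pi_1}\mathbf d=(bP_{\psi}-I)\mathbf c$. I want to show $(d_{\pi_2(k)},\ldots,d_{\pi_2(0)})_b$ is an $(n,b,\tau_2)$-permutiple with carries $\hat{\mathbf c}:=P_{\nu}\mathbf c$ (note $\hat c_0=c_{\nu(0)}=0$ by the first hypothesis, so $\hat{\mathbf c}$ is a legitimate carry vector); by Theorem~\ref{fund_conv} it suffices to establish $(nP_{\tau_2}-I)P_{\pi_2}\mathbf d=(bP_{\psi}-I)P_{\nu}\mathbf c$. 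Using $\tau_2=\pi_2^{-1}\pi_1\tau_1\pi_1^{-1}\pi_2$ one has $P_{\pi_2}^{-1}P_{\pi_1}(nP_{\tau_1}-I)=(nP_{\tau_2}-I)P_{\pi_2}^{-1}P_{\pi_1}$, hence $(nP_{\tau_2}-I)P_{\pi_2}\mathbf d=(nP_{\tau_2}-I)P_{\nu}^{-1}P_{\pi_1}\mathbf d=P_{\nu}^{-1}P_{\pi_1}(nP_{\tau_1}-I)P_{\pi_1}^{-1}P_{\pi_1}\mathbf d=P_{\nu}^{-1}(nP_{\tau_1}-I)P_{\pi_1}\mathbf d=P_{\nu}^{-1}(bP_{\psi}-I)\mathbf c$. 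So the required identity is $P_{\nu}^{-1}(bP_{\psi}-I)\mathbf c=(bP_{\psi}-I)P_{\nu}\mathbf c$, which is precisely the commutation $P_{\nu}(bP_{\psi}-I)P_{\nu}\mathbf c=(bP_{\psi}-I)P_{\nu}\mathbf c$ rearranged — and this follows from $P_{\nu}P_{\psi}\mathbf c=P_{\psi}P_{\nu}\mathbf c$, the second hypothesis. Then Theorem~\ref{fund_conv} delivers that $(d_{\pi_2(k)},\ldots,d_{\pi_2(0)})_b$ is an $(n,b,\tau_2)$-permutiple with carries $\hat{\mathbf c}=P_{\nu}\mathbf c$.

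Finally I would check the conjugacy claim, which is essentially bookkeeping: the new permutiple is written with outer permutation $\pi_2$ and inner permutation $\tau_2$, so its associated base permutation is $\pi_2\tau_2\pi_2^{-1}=\pi_2(\pi_2^{-1}\pi_1\tau_1\pi_1^{-1}\pi_2)\pi_2^{-1}=\pi_1\tau_1\pi_1^{-1}$, which is exactly the base permutation of $p$. Hence the two permutiples are conjugate in the sense of the definition, completing the proof.

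The main obstacle I anticipate is purely the careful handling of the conjugation identity $P_{\pi_2}^{-1}P_{\pi_1}(nP_{\tau_1}-I)=(nP_{\tau_2}-I)P_{\pi_2}^{-1}P_{\pi_1}$ — i.e.\ making sure the index convention $P_{\alpha\beta}=P_{\alpha}P_{\beta}$ (versus the opposite) is applied consistently with the earlier proofs (Theorem~\ref{new_perm}, Theorem~\ref{psi_commute}), since a single misplaced inverse would break the chain. Everything else is a direct substitution once $P_{\nu}$ commuting with $P_{\psi}$ on $\mathbf c$ is in hand.
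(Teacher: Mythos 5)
Your overall strategy is the paper's own: multiply the matrix identity $(nP_{\tau_1}-I)P_{\pi_1}\mathbf{d}=(bP_{\psi}-I)\mathbf{c}$ by the permutation matrix of $\nu=\pi_1^{-1}\pi_2$, use the hypothesis $P_{\nu}P_{\psi}\mathbf{c}=P_{\psi}P_{\nu}\mathbf{c}$ to slide it past $bP_{\psi}-I$, recognize the result as the matrix identity for the permuted number, and invoke Theorem~\ref{fund_conv}; the conjugacy bookkeeping $\pi_2\tau_2\pi_2^{-1}=\pi_1\tau_1\pi_1^{-1}$ and the check $\hat c_0=c_{\nu(0)}=0$ are also exactly right. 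However, the misplaced inverse you worried about is really there, and it is not cosmetic. The paper's convention is $(P_{\pi}\mathbf{v})_j=v_{\pi(j)}$, so $P_{\alpha}P_{\beta}=P_{\beta\alpha}$ and $P_{\pi}P_{\sigma}P_{\pi}^{-1}=P_{\pi^{-1}\sigma\pi}$ (this is what makes $P_{\pi}(nP_{\sigma}-I)\mathbf{d}=(nP_{\pi^{-1}\sigma\pi}-I)P_{\pi}\mathbf{d}$ in Theorem~\ref{new_perm} correct). Under that convention $P_{\pi_2}P_{\pi_1}^{-1}=P_{\pi_1^{-1}\pi_2}=P_{\nu}$, not $P_{\nu}^{-1}$, and $P_{\tau_2}=P_{\pi_2}P_{\pi_1}^{-1}P_{\tau_1}P_{\pi_1}P_{\pi_2}^{-1}$, so the correct chain is
\[
(nP_{\tau_2}-I)P_{\pi_2}\mathbf{d}=P_{\nu}(nP_{\tau_1}-I)P_{\pi_1}\mathbf{d}=P_{\nu}(bP_{\psi}-I)\mathbf{c}=(bP_{\psi}-I)P_{\nu}\mathbf{c},
\]
where the last equality is literally the commutation hypothesis.

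Your version instead lands on $P_{\nu}^{-1}(bP_{\psi}-I)\mathbf{c}$ and then asserts that $P_{\nu}^{-1}(bP_{\psi}-I)\mathbf{c}=(bP_{\psi}-I)P_{\nu}\mathbf{c}$ is the commutation hypothesis ``rearranged.'' It is not: multiplying through by $P_{\nu}$ turns it into $(bP_{\psi}-I)\mathbf{c}=P_{\nu}(bP_{\psi}-I)P_{\nu}\mathbf{c}$, which concerns the action of $P_{\nu}$ on the vector $P_{\nu}\mathbf{c}$ rather than on $\mathbf{c}$, and nothing in the hypotheses controls that. So the final step fails as written. Once the single conjugation direction is corrected, your argument coincides with the paper's proof.
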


\begin{proof}
 By assumption, we have both that $(nP_{\tau_1}-I)P_{\pi_1}\textbf{d}=(bP_{\psi}-I)\textbf{c}$ and
 $P_{\pi_1^{-1}\pi_2} P_{\psi}\textbf{c}=P_{\psi}P_{\pi_1^{-1}\pi_2}\textbf{c}$.
 Multiplying both sides of the first of the above equations by $P_{\pi_1^{-1}\pi_2}$ yields
 $P_{\pi_1^{-1}\pi_2} (nP_{\tau_1}-I)P_{\pi_1}\textbf{d}= (bP_{\psi}-I)P_{\pi_1^{-1}\pi_2}\textbf{c}$
 using our second assumption. Our first assumption and a routine calculation then shows that
 $(nP_{\pi_2^{-1}\pi_1\tau_1\pi_1^{-1}\pi_2}-I)P_{\pi_2}\textbf{d}=(bP_{\psi}-I)P_{\pi_1^{-1}\pi_2}\textbf{c}$,
 and the proof is complete.
\end{proof}

If we take the point of view that there is a reference permutiple, $p$, in every conjugacy class 
(not necessarily our initial example),
we can simply take $\pi_1$ in the above to be the identity, and $\pi=\pi_2$ to be any suitable permutation of the digits of $p$.
The above theorem then implies that $P_{\pi}P_{\psi}\textbf{c}=P_{\psi}P_{\pi}\textbf{c}$, 
or, $P_{\psi}\textbf{c}=P_{\pi\psi\pi^{-1}}\textbf{c}$.
We state the above as a single corollary to Theorems \ref{psi_commute} and \ref{psi_commute_2}.

\begin{corollary}
Let $p=(d_{k}, d_{k-1},\ldots, d_{0})_b$ be an
$(n,b, \tau_1)$-permutiple with carries $c_k$, $c_{k-1}$, $\ldots$ , $c_0$. 
Then, the following hold:
\begin{enumerate}
 \item If $(d_{\pi(k)}, d_{\pi(k-1)},\ldots, d_{\pi(0)})_b$ is an $(n,b, \tau_2)$-permutiple
from the same conjugacy class as $p$,
 then $c_{\psi(j)}=c_{\pi \psi \pi^{-1}(j)}$ for all $0 \leq j \leq k$.
 \item If $\pi$ is a permutation such that $c_{\pi(0)}=0$ and $c_{\psi(j)}=c_{\pi \psi \pi^{-1}(j)}$
 for all $0 \leq j \leq k$, then $(d_{\pi(k)}, d_{\pi(k-1)},\ldots, d_{\pi(0)})_b$ is an $(n,b, \tau_2)$-permutiple
from the same conjugacy class as $p$.
\end{enumerate} 
\label{cycle_theorem}
\end{corollary}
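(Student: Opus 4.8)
The plan is to obtain both parts as direct specializations of Theorems \ref{psi_commute} and \ref{psi_commute_2}, taking the reference permutiple $p$ to play the role of $(d_{\pi_1(k)},\ldots,d_{\pi_1(0)})_b$ with $\pi_1=\varepsilon$, the identity permutation, and $\pi_2=\pi$. Under this choice $p=(d_k,\ldots,d_0)_b$ on the nose, $\pi_1^{-1}\pi_2$ collapses to $\pi$ everywhere, and $\tau_2=\pi_2^{-1}\pi_1\tau_1\pi_1^{-1}\pi_2$ becomes $\pi^{-1}\tau_1\pi$. The only genuine manipulation is a relabeling of the universally quantified index: since $\pi$ is a bijection of $\{0,1,\ldots,k\}$, the statement ``$c_{\psi\pi(j)}=c_{\pi\psi(j)}$ for all $j$'' (the form produced by those two theorems) is equivalent to ``$c_{\psi(j)}=c_{\pi\psi\pi^{-1}(j)}$ for all $j$'' (the form stated in the corollary), via the substitution $j\mapsto\pi^{-1}(j)$ in one direction and $j\mapsto\pi(j)$ in the other.

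For part (1): with $\pi_1=\varepsilon$ and $\pi_2=\pi$, Theorem \ref{psi_commute} applies verbatim, since $p$ is an $(n,b,\tau_1)$-permutiple with the given carries and $(d_{\pi(k)},\ldots,d_{\pi(0)})_b$ is an $(n,b,\tau_2)$-permutiple in the same conjugacy class. Its conclusion reads $c_{\psi\pi(j)}=c_{\pi\psi(j)}$ for all $0\le j\le k$; replacing $j$ by $\pi^{-1}(j)$ yields $c_{\psi(j)}=c_{\pi\psi\pi^{-1}(j)}$, which is exactly the asserted identity.

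For part (2): again set $\pi_1=\varepsilon$, $\pi_2=\pi$. The hypothesis $c_{\pi(0)}=0$ is precisely $c_{\pi_1^{-1}\pi_2(0)}=0$, and substituting $j\mapsto\pi(j)$ in the given relation $c_{\psi(j)}=c_{\pi\psi\pi^{-1}(j)}$ produces $c_{\psi\pi(j)}=c_{\pi\psi(j)}$, which is the second hypothesis of Theorem \ref{psi_commute_2}. That theorem then returns that $(d_{\pi(k)},\ldots,d_{\pi(0)})_b$ is an $(n,b,\tau_2)$-permutiple with $\tau_2=\pi^{-1}\tau_1\pi$, lying in the same conjugacy class as $p$; indeed the common base permutation is $\tau_1=\varepsilon\,\tau_1\,\varepsilon^{-1}=\pi\tau_2\pi^{-1}$, so conjugacy holds by definition.

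I do not anticipate a real obstacle: once the reference frame $\pi_1=\varepsilon$ is fixed, the corollary is a bookkeeping consequence of the two preceding theorems. The one point worth stating carefully — and the closest thing to a subtlety — is the index relabeling used to pass between the ``$c_{\psi\pi(j)}=c_{\pi\psi(j)}$'' and ``$c_{\psi(j)}=c_{\pi\psi\pi^{-1}(j)}$'' forms, which is valid exactly because $\pi$ permutes the index set $\{0,1,\ldots,k\}$, so a quantifier over all $j$ is unaffected.
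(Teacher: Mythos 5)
Your proposal is correct and is essentially identical to the paper's own derivation: the paper also obtains the corollary by specializing Theorems \ref{psi_commute} and \ref{psi_commute_2} to $\pi_1=\varepsilon$ and $\pi_2=\pi$, passing from $c_{\psi\pi(j)}=c_{\pi\psi(j)}$ (equivalently $P_{\pi}P_{\psi}\textbf{c}=P_{\psi}P_{\pi}\textbf{c}$) to $c_{\psi(j)}=c_{\pi\psi\pi^{-1}(j)}$ by the same index relabeling you describe.
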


The above corollary, together with $\pi\psi\pi^{-1}=(\pi(0),\pi(1),\ldots,\pi(k))$, enables us to construct the entire collection
of all permutations, $\pi$, of a reference permutiple's digits within a particular conjugacy class. 
The next example illustrates this process. 

\begin{example}
Consider an earlier example, $(d_4,d_3,d_2,d_1,d_0)=(4, 3, 5, 1, 2)_6=2\cdot(2, 1, 5, 3, 4)_6$, a $(2,6,\sigma)$-permutiple 
whose carry vector is $(0,1,1,1,0)$.  The reader will note that this example is a $(2,6)$-palintiple.

We shall use Corollary \ref{cycle_theorem} to find all permutiples conjugate to 
$(d_4,d_3,d_2,d_1,d_0)$. 
By Corollary \ref{cycle_theorem}, we know that if $(d_4,d_3,d_2,d_1,d_0)$ and
$(d_{\pi(4)},d_{\pi(2)},d_{\pi(2)},d_{\pi(1)},d_{\pi(0)})$ are conjugate, then $\pi$ necessarily satisfies
$$
\left[
\begin{array}{c}
1\\
1\\
1\\
0\\
0\\
\end{array}
\right]
=
P_{\psi}
\left[
\begin{array}{c}
0\\
1\\
1\\
1\\
0\\
\end{array}
\right]
=
P_{\pi \psi \pi^{-1}}
\left[
\begin{array}{c}
0\\
1\\
1\\
1\\
0\\
\end{array}
\right].
$$
Restating the above in component form, we have
$$
\begin{array}{l}
c_{\pi \psi \pi^{-1}(0)}=1=c_1, c_2, \mbox{ or, } c_3,\\
c_{\pi \psi \pi^{-1}(1)}=1=c_1, c_2, \mbox{ or, } c_3,\\
c_{\pi \psi \pi^{-1}(2)}=1=c_1, c_2, \mbox{ or, } c_3,\\
c_{\pi \psi \pi^{-1}(3)}=0=c_0 \mbox{ or } c_4,\\
c_{\pi \psi \pi^{-1}(4)}=0=c_0 \mbox{ or } c_4.\\
\end{array}
$$
The list of possible candidates for $\pi \psi \pi^{-1}$ from above then consists of any permutation which can be constructed
from
$
\left(
\begin{array}{ccccc}
0 & 1 & 2 & 3 & 4\\
1,2 \mbox{ or, } 3 & 1,2 \mbox{ or, } 3  & 1,2 \mbox{ or, } 3 & 0 \mbox{ or } 4 & 0 \mbox{ or } 4 \\
\end{array}
\right),
$
and since $\pi \psi \pi^{-1}$ is a 5-cycle, we can eliminate any fixed points so that the above permutation must have the form
$$
\left(
\begin{array}{ccccc}
0                  & 1                & 2               & 3 & 4\\
1,2 \mbox{ or, } 3 & 2 \mbox{ or } 3  & 1 \mbox{ or } 3 & 4 & 0 \\
\end{array}
\right).
$$
Therefore, $\pi \psi \pi^{-1}=(\pi(0),\pi(1),\pi(2),\pi(3),\pi(4))$ must equal either $(0,1,2,3,4)$ or $(0,2,1,3,4)$.
Since $c_0=c_4=0$, we have four permutations which satisfy the conditions of Corollary \ref{cycle_theorem}: 
the identity
$
\varepsilon=\left(
\begin{array}{ccccc}
0 & 1 & 2 & 3 & 4 \\
0 & 1 & 2 & 3 & 4 \\
\end{array}
\right)
$,
$
\psi^4=\left(
\begin{array}{ccccc}
0 & 1 & 2 & 3 & 4 \\
4 & 0 & 1 & 2 & 3 \\
\end{array}
\right)
$,
$
(1,2)=\left(
\begin{array}{ccccc}
0 & 1 & 2 & 3 & 4 \\
0 & 2 & 1 & 3 & 4 \\
\end{array}
\right)
$, and
$
(1,2)\psi^4=\left(
\begin{array}{ccccc}
0 & 1 & 2 & 3 & 4 \\
4 & 0 & 2 & 1 & 3 \\
\end{array}
\right)
$.
These permutations give us the entire conjugacy class listed in the table below.
\begin{center}
 \begin{tabular}{|c|c|c|c|}
\hline
$(d_{\pi(4)},d_{\pi(3)},d_{\pi(2)},d_{\pi(1)},d_{\pi(0)})_6$ & $\pi$ & $\tau$ & $(c_{\pi(4)},c_{\pi(3)},c_{\pi(2)},c_{\pi(1)},c_{\pi(0)})$\\\hline
$(4,3,5,1,2)_6$ & $\varepsilon$ & $\rho$ & $(0,1,1,1,0)$\\\hline
$(4,3,1,5,2)_6$ & $(1,2)$ & $(1,2)\rho(1,2)$ & $(0,1,1,1,0)$\\\hline
$(3,5,1,2,4)_6$ & $\psi^4$ & $\psi^{-4}\rho\psi^4$ & $(1,1,1,0,0)$\\\hline
$(3,1,5,2,4)_6$ & $(1,2)\psi^4$ & $\psi^{-4}(1,2)\rho(1,2)\psi^4$ & $(1,1,1,0,0)$\\\hline
\end{tabular}
\end{center}

\end{example}

We shall now look between conjugacy classes.
The converse of Theorem \ref{conj_class_carries} does not hold in general. However, assuming its consequent
does yield a useful theorem which gives us a list of base permutation candidates from every conjugacy class
with the same set of carries as the original example.

\begin{theorem}
Let 
$(d_{\pi_1(k)}, d_{\pi_1(k-1)},\ldots, d_{\pi_1(0)})_b$  and $(d_{\pi_2(k)}, d_{\pi_2(k-1)},\ldots, d_{\pi_2(0)})_b$
be $(n,b, \tau_1)$ and $(n,b, \tau_2)$-permutiples, respectively,
with carries given by $c_k$, $c_{k-1}$, $\ldots$ , $c_0$
and $\hat{c}_k$, $\hat{c}_{k-1}$, $\ldots$ , $\hat{c}_0$, respectively.
If $\hat{c}_j=c_{\pi_1^{-1}\pi_2(j)}$ for all $0 \leq j \leq k$, then 
$n d_{\pi_1\tau_1\pi_1^{-1}(j)} \equiv n d_{\pi_2\tau_2\pi_2^{-1}(j)} \mod b$
for all $0 \leq j \leq k$.
\end{theorem}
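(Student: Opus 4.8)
The plan is to argue entirely within the matrix formulation of Equation~\ref{matrix}, mirroring the ``centering'' manipulations used in the proofs of Theorems~\ref{conj_class_carries} and~\ref{psi_commute}. Write $\textbf{d}$ for the digit vector of the reference permutiple $(d_k,\ldots,d_0)_b$ and $\hat{\textbf{c}}$ for the carry vector of the second permutiple. The hypothesis $\hat c_j=c_{\pi_1^{-1}\pi_2(j)}$ becomes $\hat{\textbf{c}}=P_{\pi_1^{-1}\pi_2}\textbf{c}$, and, by the consequence of Theorems~\ref{fund} and~\ref{fund_conv}, the two permutiple assumptions are
\[
(nP_{\tau_1}-I)P_{\pi_1}\textbf{d}=(bP_\psi-I)\textbf{c}
\qquad\text{and}\qquad
(nP_{\tau_2}-I)P_{\pi_2}\textbf{d}=(bP_\psi-I)P_{\pi_1^{-1}\pi_2}\textbf{c}.
\]

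First I would rewrite $(nP_{\tau_i}-I)P_{\pi_i}=P_{\pi_i}(nP_{\pi_i\tau_i\pi_i^{-1}}-I)$ for $i=1,2$ and left-multiply each equation by $P_{\pi_i^{-1}}$, using $P_{\pi_i^{-1}}P_{\pi_i}=I$ and, in the second equation, the identity $P_{\pi_2^{-1}}P_{\pi_1^{-1}\pi_2}=P_{\pi_1^{-1}}$. This recasts the two assumptions as
\[
(nP_{\pi_1\tau_1\pi_1^{-1}}-I)\textbf{d}=bP_{\pi_1^{-1}}P_\psi\textbf{c}-P_{\pi_1^{-1}}\textbf{c}
\]
and
\[
(nP_{\pi_2\tau_2\pi_2^{-1}}-I)\textbf{d}=bP_{\pi_2^{-1}}P_\psi P_{\pi_1^{-1}\pi_2}\textbf{c}-P_{\pi_1^{-1}}\textbf{c}.
\]
The crucial point is that the $-I$ terms on the left and the $-P_{\pi_1^{-1}}\textbf{c}$ terms on the right are identical in both equations, so subtracting the second from the first eliminates them and leaves
\[
n\bigl(P_{\pi_1\tau_1\pi_1^{-1}}-P_{\pi_2\tau_2\pi_2^{-1}}\bigr)\textbf{d}=b\bigl(P_{\pi_1^{-1}}P_\psi-P_{\pi_2^{-1}}P_\psi P_{\pi_1^{-1}\pi_2}\bigr)\textbf{c}.
\]

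Every entry on the right-hand side is an integer, so reducing this identity modulo $b$ gives $nP_{\pi_1\tau_1\pi_1^{-1}}\textbf{d}\equiv nP_{\pi_2\tau_2\pi_2^{-1}}\textbf{d}\mod b$, which read componentwise is exactly $n\,d_{\pi_1\tau_1\pi_1^{-1}(j)}\equiv n\,d_{\pi_2\tau_2\pi_2^{-1}(j)}\mod b$ for all $0\le j\le k$. I expect the only delicate bookkeeping to be the convention that $\sigma\mapsto P_\sigma$ is an anti-homomorphism of permutation matrices (already used implicitly in the earlier proofs), which is what makes $P_{\pi_i^{-1}}P_{\tau_i}P_{\pi_i}=P_{\pi_i\tau_i\pi_i^{-1}}$ and $P_{\pi_2^{-1}}P_{\pi_1^{-1}\pi_2}=P_{\pi_1^{-1}}$ hold; everything else is routine. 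Conceptually, the main obstacle is simply recognizing that the hypothesis on carries is precisely what forces the two $-P_{\pi_1^{-1}}\textbf{c}$ contributions to cancel, so that the difference of the two ``base-permuted'' digit vectors becomes divisible by $b$ after scaling by $n$; without the carry hypothesis one would be left with an extra $(P_{\pi_2^{-1}}-P_{\pi_1^{-1}})\textbf{c}$ term that is not a multiple of $b$.
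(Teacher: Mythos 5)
Your argument is correct and follows essentially the same route as the paper's proof: both rewrite the two permutiple equations via the conjugation identity $(nP_{\tau_i}-I)P_{\pi_i}=P_{\pi_i}(nP_{\pi_i\tau_i\pi_i^{-1}}-I)$, cancel the common $-P_{\pi_1^{-1}}\textbf{c}$ contribution using $P_{\pi_2^{-1}}P_{\pi_1^{-1}\pi_2}=P_{\pi_1^{-1}}$, and conclude by reducing modulo $b$. The only difference is that you keep exact equalities and reduce modulo $b$ at the end, whereas the paper reduces modulo $b$ first and manipulates congruences; this is purely cosmetic.
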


\begin{proof}
 Our assumptions in matrix form are
 $(nP_{\tau_1}-I)P_{\pi_1}\textbf{d}=(bP_{\psi}-I)\textbf{c}$
 and  $(nP_{\tau_2}-I)P_{\pi_2}\textbf{d}=(bP_{\psi}-I)P_{\pi_1^{-1}\pi_2}\textbf{c}$.
 Reducing modulo $b$, we have both
 $(nP_{\tau_1}-I)P_{\pi_1}\textbf{d} \equiv -\textbf{c} \mod b$
 and  $(nP_{\tau_2}-I)P_{\pi_2}\textbf{d}\equiv-P_{\pi_1^{-1}\pi_2}\textbf{c} \mod b$.
 It follows that $P_{\pi_1}(nP_{\pi_1\tau_1\pi_1^{-1}}-I)\textbf{d}\equiv-\textbf{c} \mod b$
 and  $P_{\pi_2}(nP_{\pi_2\tau_2\pi_2^{-1}}-I)\textbf{d}\equiv-P_{\pi_1^{-1}\pi_2}\textbf{c} \mod b$,
 from which we obtain 
 $(nP_{\pi_1\tau_1\pi_1^{-1}}-I)\textbf{d}
 \equiv-P_{\pi_1^{-1}}\textbf{c} 
 \equiv (nP_{\pi_2\tau_2\pi_2^{-1}}-I)\textbf{d}\mod b$.
 Thus, $nP_{\pi_1\tau_1\pi_1^{-1}}\textbf{d} \equiv nP_{\pi_2\tau_2\pi_2^{-1}}\textbf{d}\mod b$.
\end{proof}

Letting $\pi_1$ be the identity and $\tau_1$ be $\sigma$
in the above theorem, we obtain a result which relates any $(n,b,\tau)$-permutiple with the same set of carries
to our known example. 

\begin{corollary}
Let $(d_k, \ldots, d_0)_b$ be an $(n,b,\sigma)$-permutiple with carries $c_k$, $\ldots$ , $c_0$, and let 
 $(d_{\pi(k)}, d_{\pi(k-1)},\ldots, d_{\pi(0)})_b$ be an $(n,b,\tau)$-permutiple
with carries $c_{\pi(k)}$, $c_{\pi(k-1)}$, $\ldots$ , $c_{\pi(0)}$, 
then $nd_{\sigma (j)}\equiv nd_{\pi \tau \pi^{-1}(j)} \mod b$ for all $0\leq j\leq k$.
\label{base_perm}
\end{corollary}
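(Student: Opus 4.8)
The plan is to obtain this corollary as an immediate specialization of the theorem that precedes it, since the corollary's hypotheses are exactly the instance of that theorem's hypotheses with $\pi_1$ taken to be the identity. Concretely, I would apply the preceding theorem with $\pi_1 = \varepsilon$, $\tau_1 = \sigma$, $\pi_2 = \pi$, and $\tau_2 = \tau$. With these choices the first permutiple $(d_{\pi_1(k)}, \ldots, d_{\pi_1(0)})_b$ is literally $(d_k, \ldots, d_0)_b$, our known $(n,b,\sigma)$-permutiple with carries $c_k, \ldots, c_0$, and the second permutiple $(d_{\pi_2(k)}, \ldots, d_{\pi_2(0)})_b$ is $(d_{\pi(k)}, \ldots, d_{\pi(0)})_b$, the given $(n,b,\tau)$-permutiple.

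Next I would check that the antecedent $\hat{c}_j = c_{\pi_1^{-1}\pi_2(j)}$ of the preceding theorem is satisfied. Since $\pi_1 = \varepsilon$ we have $\pi_1^{-1}\pi_2 = \pi$, so this condition reads $\hat{c}_j = c_{\pi(j)}$ for all $0 \le j \le k$; but this is precisely the hypothesis of the corollary, namely that the carries of $(d_{\pi(k)}, \ldots, d_{\pi(0)})_b$ are $c_{\pi(k)}, c_{\pi(k-1)}, \ldots, c_{\pi(0)}$. Hence the preceding theorem applies, and its conclusion $nd_{\pi_1\tau_1\pi_1^{-1}(j)} \equiv nd_{\pi_2\tau_2\pi_2^{-1}(j)} \mod b$ becomes $nd_{\sigma(j)} \equiv nd_{\pi\tau\pi^{-1}(j)} \mod b$ for all $0 \le j \le k$, which is exactly the assertion of the corollary.

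Since the argument is a pure specialization, there is no genuine obstacle; the only things worth verifying carefully are bookkeeping points, namely that $\varepsilon^{-1}\sigma\varepsilon = \sigma$ so the left-hand conjugation collapses cleanly to $\sigma$, and that the labeling of the carries of the permuted permutiple in the corollary's statement matches the indexing $\hat{c}_j = c_{\pi_1^{-1}\pi_2(j)}$ demanded by the preceding theorem. Once those are confirmed, the proof is a single invocation of that theorem.
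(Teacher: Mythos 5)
Your proposal is correct and is exactly the paper's own derivation: the author obtains Corollary \ref{base_perm} by letting $\pi_1$ be the identity and $\tau_1$ be $\sigma$ in the preceding theorem, just as you do. The bookkeeping points you flag (that $\varepsilon^{-1}\sigma\varepsilon=\sigma$ and that the corollary's carry hypothesis matches $\hat{c}_j=c_{\pi_1^{-1}\pi_2(j)}$) are the only substance, and you have verified them.
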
 

The above corollary will, under certain conditions, enable us to find all 
base permutations $\beta=\pi\tau\pi^{-1}$ for every possible conjugacy class.

Our next result gives us conditions for the existence of a bijective correspondence 
between permutiples, namely,
$(d_{\pi(k)}, d_{\pi(k-1)},\ldots, d_{\pi(0)})_b \mapsto (d_{\alpha\pi (k)}, d_{\alpha \pi (k-1)},\ldots, d_{\alpha \pi(0)})_b$.

\begin{theorem}
Let $(d_{\pi_1(k)}, d_{\pi_1(k-1)},\ldots, d_{\pi_1(0)})_b$  and $(d_{\pi_2(k)}, d_{\pi_2(k-1)},\ldots, d_{\pi_2(0)})_b$
be $(n,b, \tau_1)$ and $(n,b, \tau_2)$-permutiples, respectively. 
If there exists an $\alpha$ such that
$(nP_{\tau_2}-I)P_{\alpha}\textbf{d}=(nP_{\tau_1}-I)\textbf{d}$,
then $(d_{\pi(k)}, d_{\pi(k-1)},\ldots, d_{\pi(0)})_b$ is an $(n,b,\pi^{-1}\tau_1\pi)$-permutiple
if and only if
$(d_{\alpha\pi (k)}, d_{\alpha \pi (k-1)},\ldots, d_{\alpha \pi(0)})_b$ is an 
$(n,b, \pi^{-1}\tau_2\pi)$-permutiple.
\label{bijection}
\end{theorem}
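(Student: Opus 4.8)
The plan is to unwind both sides of the biconditional using Equation \ref{matrix} together with the fact, highlighted just after Theorem \ref{fund_conv}, that a base-$b$ numeral with digit vector $\textbf{e}$ is an $(n,b,\mu)$-permutiple if and only if there is a $(k+1)$-tuple $\textbf{c}$ of base-$n$ digits with $c_0=0$ such that $(bP_\psi-I)\textbf{c}=(nP_\mu-I)\textbf{e}$ (the forward direction uses Theorems \ref{fund} and \ref{carries}, the backward direction Theorem \ref{fund_conv}). Throughout I will use the permutation-matrix conventions already in force in the proof of Theorem \ref{new_perm}, namely $P_{\pi^{-1}\gamma\pi}=P_\pi P_\gamma P_\pi^{-1}$ and $P_{\alpha\pi}=P_\pi P_\alpha$.

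First I would rewrite the left side. The numeral $(d_{\pi(k)},\ldots,d_{\pi(0)})_b$ has digit vector $P_\pi\textbf{d}$, so by the highlighted fact it is an $(n,b,\pi^{-1}\tau_1\pi)$-permutiple exactly when $(bP_\psi-I)\textbf{c}=(nP_{\pi^{-1}\tau_1\pi}-I)P_\pi\textbf{d}$ has a base-$n$ solution $\textbf{c}$ with $c_0=0$. Since $P_{\pi^{-1}\tau_1\pi}P_\pi=P_\pi P_{\tau_1}P_\pi^{-1}P_\pi=P_\pi P_{\tau_1}$, the right-hand side simplifies to $P_\pi(nP_{\tau_1}-I)\textbf{d}$. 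The right side is handled identically: $(d_{\alpha\pi(k)},\ldots,d_{\alpha\pi(0)})_b$ has digit vector $P_{\alpha\pi}\textbf{d}=P_\pi P_\alpha\textbf{d}$, and it is an $(n,b,\pi^{-1}\tau_2\pi)$-permutiple exactly when $(bP_\psi-I)\hat{\textbf{c}}=(nP_{\pi^{-1}\tau_2\pi}-I)P_{\alpha\pi}\textbf{d}$ has a base-$n$ solution $\hat{\textbf{c}}$ with $\hat c_0=0$; and $(nP_{\pi^{-1}\tau_2\pi}-I)P_{\alpha\pi}=P_\pi(nP_{\tau_2}-I)P_\alpha$ by the same cancellation, so the right-hand side is $P_\pi(nP_{\tau_2}-I)P_\alpha\textbf{d}$.

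At this point I would invoke the hypothesis on $\alpha$: multiplying $(nP_{\tau_2}-I)P_\alpha\textbf{d}=(nP_{\tau_1}-I)\textbf{d}$ on the left by $P_\pi$ shows that the two vectors $P_\pi(nP_{\tau_1}-I)\textbf{d}$ and $P_\pi(nP_{\tau_2}-I)P_\alpha\textbf{d}$ are equal. Consequently the single linear system $(bP_\psi-I)\textbf{x}=P_\pi(nP_{\tau_1}-I)\textbf{d}$ admits a base-$n$ solution with zero $0$th coordinate in the first situation precisely when it does in the second --- indeed the very same $\textbf{x}$ works in both --- which is exactly the asserted equivalence (and, as a by-product, shows the two permutiples have the same carry sequence).

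I do not expect a genuine obstacle: the argument is essentially bookkeeping once the right objects are named. The only delicate points are keeping the permutation-matrix composition and conjugation identities consistent with the paper's convention, and being scrupulous that ``is an $(n,b,\mu)$-permutiple'' is translated through Theorems \ref{fund}, \ref{carries}, and \ref{fund_conv} into the solvability of Equation \ref{matrix} by a base-$n$ carry vector with $c_0=0$, rather than treated as already carrying such data. After that, the hypothesis equating $(nP_{\tau_2}-I)P_\alpha\textbf{d}$ with $(nP_{\tau_1}-I)\textbf{d}$ does all the work.
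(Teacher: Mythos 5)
Your argument is correct and is essentially the paper's own proof: both reduce each side of the biconditional to the matrix equation $(bP_\psi-I)\mathbf{c}=P_\pi(nP_{\tau_1}-I)\mathbf{d}$ via the identities $(nP_{\pi^{-1}\tau_1\pi}-I)P_\pi=P_\pi(nP_{\tau_1}-I)$ and $(nP_{\pi^{-1}\tau_2\pi}-I)P_{\alpha\pi}=P_\pi(nP_{\tau_2}-I)P_\alpha$, then invoke the hypothesis on $\alpha$ and Theorem \ref{fund_conv}. Your symmetric phrasing (one linear system, solvable in one situation iff in the other) even makes explicit the paper's subsequent remark that the two permutiples share the same carries.
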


\begin{proof}
Suppose 
$(d_{\pi(k)}, d_{\pi(k-1)},\ldots, d_{\pi(0)})_b$ is an $(n,b,\pi^{-1}\tau_1\pi)$-permutiple with 
carries $c_k$, $c_{k-1}$, $\ldots$ , $c_0$, then
$(nP_{\pi^{-1}\tau_1\pi}-I)P_{\pi}\textbf{d}=(bP_{\psi}-I)\textbf{c}$.
Now, by the theorem hypothesis, we have
$
(nP_{\pi^{-1}\tau_1\pi}-I)P_{\pi}\textbf{d}
=P_{\pi}(nP_{\tau_1}-I)\textbf{d}
=P_{\pi}(nP_{\tau_2}-I)P_{\alpha}\textbf{d}
=(nP_{\pi^{-1}\tau_2\pi}-I)P_{\alpha\pi}\textbf{d}
$
so that $(nP_{\pi^{-1}\tau_2\pi}-I)P_{\alpha\pi}\textbf{d}=(bP_{\psi}-I)\textbf{c}$.
By Theorem \ref{fund_conv}, the forward implication holds. 
The reverse implication follows in similar fashion.
\end{proof}

Thus, the above gives us a bijection between conjugacy classes provided that 
$\pi_1 \tau_1 \pi_1^{-1} \neq \pi_2 \tau_2 \pi_2^{-1}$.
Also, the reader should note that the carries of $(d_{\pi(k)}, d_{\pi(k-1)},\ldots, d_{\pi(0)})_b$ and 
$(d_{\alpha\pi (k)}, d_{\alpha \pi (k-1)},\ldots, d_{\alpha \pi(0)})_b$ must be the same 
according to the above argument.

At this point the big question is whether or not the results given thus far can give us 
all the examples we seek. If we recall our initial examples which motivated our
conjugacy class definition, we can see that, in general, the answer is no. 
However, there is a condition which guarantees that we have found all of the desired examples. 
The next theorem tells us that if $n$ divides $b$, then all permutiples
having the same digits as our known example have the same set of carries as 
the known example. 

\begin{theorem}
Let $(d_k, \ldots, d_0)_b$ be an $(n,b,\sigma)$-permutiple with carries $c_k$, $\ldots$ , $c_0$, and let 
 $(d_{\pi(k)}, d_{\pi(k-1)},\ldots, d_{\pi(0)})_b$ be an $(n,b,\tau)$-permutiple
with carries $\hat{c}_k$, $\hat{c}_{k-1}$, $\ldots$ , $\hat{c}_0$. 
If $n$ divides $b$, then $\hat{c}_j=c_{\pi(j)}$ for all $0 \leq j \leq k$. 
\label{n_div_b}
\end{theorem}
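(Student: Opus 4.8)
The plan is to reduce the fundamental carry–digit relation of Theorem \ref{fund} modulo $n$ rather than modulo $b$, exploiting the hypothesis $n \mid b$. Applying Theorem \ref{fund} to the known permutiple gives $bc_{j+1}-c_j=nd_{\sigma(j)}-d_j$ for all $0\leq j\leq k$. Since $n$ divides $b$, both $bc_{j+1}$ and $nd_{\sigma(j)}$ vanish modulo $n$, so this congruence collapses to $c_j\equiv d_j \mod n$ for every $j$. Now view the second permutiple with digit tuple $(e_k,\ldots,e_0)$ where $e_j=d_{\pi(j)}$ and permutation $\tau$; Theorem \ref{fund} turns into $b\hat{c}_{j+1}-\hat{c}_j=ne_{\tau(j)}-e_j$, and the same reduction modulo $n$ yields $\hat{c}_j\equiv e_j=d_{\pi(j)} \mod n$ for every $j$.

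Combining the two observations, $\hat{c}_j\equiv d_{\pi(j)}\equiv c_{\pi(j)} \mod n$, where the last step is just the first displayed congruence evaluated at the index $\pi(j)$. Finally, Theorem \ref{carries} gives $0\leq \hat{c}_j\leq n-1$ and $0\leq c_{\pi(j)}\leq n-1$; two members of $\{0,1,\ldots,n-1\}$ that are congruent modulo $n$ must coincide, so $\hat{c}_j=c_{\pi(j)}$, which is the claim. In matrix language the same argument is a single line: reducing $(nP_{\sigma}-I)\textbf{d}=(bP_{\psi}-I)\textbf{c}$ and $(nP_{\tau}-I)P_{\pi}\textbf{d}=(bP_{\psi}-I)\hat{\textbf{c}}$ modulo $n$ (using $n\mid b$) gives $\textbf{c}\equiv\textbf{d}$ and $\hat{\textbf{c}}\equiv P_{\pi}\textbf{d}\mod n$, hence $\hat{\textbf{c}}\equiv P_{\pi}\textbf{c}\mod n$, and Theorem \ref{carries} upgrades this to equality.

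I do not anticipate a genuine obstacle; the only point requiring care is the bookkeeping for the second permutiple — writing its digit vector as $P_{\pi}\textbf{d}$ (equivalently, its $j$th digit as $d_{\pi(j)}$) so that the reduction modulo $n$ produces $\hat{c}_j\equiv d_{\pi(j)}$ rather than some other re-indexed digit. The conceptual content is simply that divisibility of $b$ by $n$ forces each carry to be congruent modulo $n$ to the digit sitting above it, and the bound $c_j\leq n-1$ from Theorem \ref{carries} then pins the carry down exactly, which is what makes the conclusion so rigid.
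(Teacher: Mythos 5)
Your proof is correct and follows essentially the same route as the paper: the paper reduces the two matrix relations $(bP_\psi-I)\textbf{c}=(nP_\sigma-I)\textbf{d}$ and $(bP_\psi-I)\hat{\textbf{c}}=(nP_\tau-I)P_\pi\textbf{d}$ modulo $n$ to get $\hat{\textbf{c}}\equiv P_\pi\textbf{c}\bmod n$ and then invokes Theorem \ref{carries}, which is exactly your one-line matrix version (your component-wise computation is just the unpacked form of the same argument).
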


\begin{proof}
 By Equation \ref{matrix}, we have both $(b P_\psi-I)\textbf{c} = (nP_\sigma-I) \textbf{d}$ and
 $(b P_\psi-I)\hat{\textbf{c}} = (nP_\tau-I) P_{\pi}\textbf{d}$. Since $n$ divides $b$, it follows that
 $\textbf{c} \equiv \textbf{d} \mod n$ and $\hat{\textbf{c}} \equiv P_{\pi}\textbf{d} \mod n$.
Thus, $\hat{\textbf{c}} \equiv P_{\pi}\textbf{c} \mod n$. By Theorem \ref{carries}, it follows that
$\hat{\textbf{c}} = P_{\pi}\textbf{c}$, which establishes the theorem.
\end{proof}

Thus, when $n$ divides $b$, Corollary \ref{base_perm} enables us to find
every possible base permutation $\beta=\pi \tau \pi^{-1}$.
From there, we may use either Theorem \ref{fund} or other techniques (such as those in the following example) to find a reference
permutiple from each conjugacy class.
Then, using Corollary \ref{cycle_theorem} we find all $\pi$ within each
conjugacy class. Thus, when $n$ divides $b$, finding all permutiples with the same digits as a known permutiple
becomes considerably easier. The next two examples illustrate the above approach.

\begin{example}
 Using our results, we shall find all possible 5-digit $(2,6,\sigma)$-permutiples starting from the
 $(2,6,\rho)$-permutiple $(d_4,d_3,d_2,d_1,d_0)_6=(4,3,5,1,2)_6=2\cdot(2,1,5,3,4)_6$ 
 with carries $(c_4,c_3,c_2,c_1,c_0)=(0,1,1,1,0)$ which we considered in the previous example. 
 
By Corollary \ref{base_perm} and Theorem \ref{n_div_b},
any suitable base permutation $\beta$ necessarily satisfies $2d_{\rho(j)}\equiv 2d_{\beta(j)} \mod 6$ for all $0 \leq j \leq 4$,
which, since 2 divides 6, becomes $d_{\rho(j)}\equiv d_{\beta(j)} \mod 3$ for all $0 \leq j \leq 4$.
In matrix form we have
$$ 
\left[\begin{matrix}
    d_{\beta(0)}\\
   d_{\beta(1)}\\
   d_{\beta(2)}\\
   d_{\beta(3)}\\
   d_{\beta(4)}\\
 \end{matrix}
\right] 
\equiv
 \left[
 \begin{matrix}
   1\\ 
   0\\
   2\\
   1\\
    2\\
 \end{matrix}
\right]
\mod 3.
$$
Expressing the above in component form gives us
$$
\begin{array}{l}
d_{\beta(0)}=1=d_1 \mbox{ or } d_4,\\
d_{\beta(1)}=0=d_3, \\
d_{\beta(2)}=2=d_0 \mbox{ or } d_2,\\
d_{\beta(3)}=1=d_1 \mbox{ or } d_4,\\
d_{\beta(4)}=2=d_0 \mbox{ or } d_2. \\
\end{array}
$$ 
Then, any base permutation, $\beta$, necessarily has the form
$
\left(
\begin{array}{ccccc}
0 & 1 & 2 & 3 & 4 \\
1 \mbox{ or } 4 & 3  & 0 \mbox{ or } 2 & 1 \mbox{ or } 4 & 0 \mbox{ or } 2 \\
\end{array}
\right).
$ 
Thus, there are four candidate base permutations: $\beta_1=\rho$, $\beta_2=(4,2,0,1,3)$, $\beta_3=(4,2,0)(1,3)$, and 
$\beta_4=(4,0,1,3)$.

For $\beta_1=\rho$, the solution corresponding to our known example, we note that we already determined its conjugacy
class in the previous example.

We now consider the conjugacy class for $\beta_2=(4,2,0,1,3)$. 
We shall find this class by finding a bijection from 
the above class with base permutation $\rho$ as per Theorem \ref{bijection}. 
But first we must find an example from the conjugacy class with base permutation $\beta_2$.
Provided a suitable permutation $\alpha$ exists, 
the bijection guaranteed by Theorem \ref{bijection} maps permutiples
with carry vector $\textbf{c}$ to other permutiples with the same carry vector.
Therefore, if such an $\alpha$ exists, we know that our known example $(d_k,\ldots,d_0)$ will map to
$(d_{\alpha(k)}, d_{\alpha(k-1)},\ldots, d_{\alpha(0)})_b$. Then, by Theorem \ref{n_div_b},
$\alpha$ fixes the carry vector $(0,1,1,1,0)$ of our known $(2,6,\rho)$ example. 
Thus, $\alpha$ must contain a factor of either the identity or $(4,0)$, and a factor
of either $(3,2,1)$, $(1,2,3)$, $(3,1,2)$, $(1,2)$, $(1,2)$, $(1,3)$, or $(2,3)$.
Checking the possibilities which are not already listed in the class found in the previous example by simple base-6 arithmetic,
we see that either $\alpha=(4,0)(3,2,1)=(1,2)\rho$ or $\alpha=(4,0)(3,2)=(1,2)\rho(1,2)$.
Respectively, these values give us the permutiples $(2,5,1,3,4)_6=2\cdot(1,2,3,4,5)_6$,
for which $\tau_2=(0,1)\beta_2(0,1)=(4,2,1,0,3)$,
and $(2,5,3,1,4)_6=2\cdot(1,2,4,3,5)_6$, for which $\tau_2=(1,2)(0,1)\beta_2(0,1)(1,2)=(4,1,2,0,3)$.
The reader may check that both values of $\alpha$ yield a bijection. 
Using the first value, $\alpha=(1,2)\rho$ with $\tau_2=(0,1)\beta_2(0,1)=(4,2,1,0,3)$,
Theorem \ref{bijection} easily gives us the rest of the permutiples in this class
and may be found in the table below. 

\begin{center}
 \begin{tabular}{|c|c|c|c|}
\hline
$(d_{\pi(4)},d_{\pi(3)},d_{\pi(2)},d_{\pi(1)},d_{\pi(0)})_6$ & $\pi$ & $\tau$ & $(c_{\pi(4)},c_{\pi(3)},c_{\pi(2)},c_{\pi(1)},c_{\pi(0)})$\\\hline
$(2,5,1,3,4)_6$ & $(1,2)\rho$ & $\tau_2=(4,2,1,0,3)$ & $(0,1,1,1,0)$\\\hline
$(2,5,3,1,4)_6$ & $(1,2)\rho(1,2)$ & $(1,2)\tau_2(1,2)$ & $(0,1,1,1,0)$\\\hline
$(5,1,3,4,2)_6$ & $(1,2)\rho \psi^4$ & $\psi^{-4}\tau_2\psi^4$ & $(1,1,1,0,0)$\\\hline
$(5,3,1,4,2)_6$ & $(1,2)\rho(1,2)\psi^4$ & $\psi^{-4}(1,2)\tau_2(1,2)\psi^4$ & $(1,1,1,0,0)$\\\hline
\end{tabular}
\end{center}

Determining the conjugacy class having the base permutation $\beta_3=(4,2,0)(3,1)$, 
we find an example from this class.
We shall attempt to find a $(2,6,\beta_3)$-permutiple 
$(d_{\pi(k)}, d_{\pi(k-1)},\ldots, d_{\pi(0)})_b$.
Then $\pi\beta_3\pi^{-1}=\beta_3$, so that $(\pi(4),\pi(2),\pi(0))(\pi(3),\pi(1))=(4,2,0)(3,1)$.
Since the first carry of any permutiple must always be zero, we know that $\pi(0)$ equals
either 0 or 4. Hence, provided $\pi$ commutes with $\beta_3$, there are 8 possibilities,
among which, $\pi=\beta_3$ provides us with a solution.

Therefore, by Theorem \ref{n_div_b}, with $\pi=(4,2,0)(1,3)$, we have that 
$(c_{\pi(4)},c_{\pi(3)},c_{\pi(2)},c_{\pi(1)},c_{\pi(0)})=(1,1,0,1,0)$.
So the new initial carry vector is given by
$\hat{\textbf{c}}=P_{\pi}\textbf{c}=\left[\begin{matrix}
   0\\
   1\\
   0\\
   1\\
   1\\
 \end{matrix}
\right].$
For notational convenience, we shall write $(\hat{d}_4,\hat{d}_3,\hat{d}_2,\hat{d}_1,\hat{d}_0)_6$
in place of $(d_{\pi(4)},d_{\pi(2)},d_{\pi(2)},d_{\pi(1)},d_{\pi(0)})_6=(5,1,2,3,4)_6$.
Then, by Corollary \ref{cycle_theorem}, if  
$(\hat{d}_4,\hat{d}_3,\hat{d}_2,\hat{d}_1,\hat{d}_0)_6$ and
$(\hat{d}_{\pi(4)},\hat{d}_{\pi(2)},\hat{d}_{\pi(2)},\hat{d}_{\pi(1)},\hat{d}_{\pi(0)})_6$ are conjugate, 
then $\pi$ necessarily satisfies
$$
\left[
\begin{array}{c}
1\\
0\\
1\\
1\\
0\\
\end{array}
\right]
=
P_{\psi}
\left[
\begin{array}{c}
0\\
1\\
0\\
1\\
1\\
\end{array}
\right]
=
P_{\pi \psi \pi^{-1}}
\left[
\begin{array}{c}
0\\
1\\
0\\
1\\
1\\
\end{array}
\right].
$$
As with the previous example, the above is restated in component form,
$$
\begin{array}{l}
\hat{c}_{\pi \psi \pi^{-1}(0)}=1=\hat{c}_1, \hat{c}_3, \mbox{ or, } \hat{c}_4,\\
\hat{c}_{\pi \psi \pi^{-1}(1)}=0=\hat{c}_0 \mbox{ or } \hat{c}_2,\\
\hat{c}_{\pi \psi \pi^{-1}(2)}=1=\hat{c}_1, \hat{c}_3, \mbox{ or, } \hat{c}_4,\\
\hat{c}_{\pi \psi \pi^{-1}(3)}=1=\hat{c}_1, \hat{c}_3, \mbox{ or, } \hat{c}_4,\\
\hat{c}_{\pi \psi \pi^{-1}(4)}=0=\hat{c}_0 \mbox{ or } \hat{c}_2,\\
\end{array}
$$
which provides a list of possible values of $\pi \psi \pi^{-1}$, that is, any permutation of the form
$
\left(
\begin{array}{ccccc}
0 & 1 & 2 & 3 & 4\\
1,3 \mbox{ or, } 4 & 0 \mbox{ or } 2  & 1,3 \mbox{ or, } 4 & 1 \mbox{ or } 4 & 0 \mbox{ or } 2 \\
\end{array}
\right).
$
Examining all 8 possibilities, $\pi \psi \pi^{-1}=(\pi(0),\pi(1),\pi(2),\pi(3),\pi(4))$ must equal either 
$(0,1,2,3,4)$, $(0,3,4,2,1)$, $(0,4,2,3,1)$, or $(0,3,1,2,4)$. 
Since $\hat{c}_0=\hat{c}_2=0$, 
we have the following 8 permutations which satisfy the conditions of Corollary \ref{cycle_theorem}: the identity
$
\varepsilon=\left(
\begin{array}{ccccc}
0 & 1 & 2 & 3 & 4 \\
0 & 1 & 2 & 3 & 4 \\
\end{array}
\right)
$,
$
\pi_2=\left(
\begin{array}{ccccc}
0 & 1 & 2 & 3 & 4 \\
0 & 3 & 4 & 2 & 1 \\
\end{array}
\right)
$,
$
\pi_3=\left(
\begin{array}{ccccc}
0 & 1 & 2 & 3 & 4 \\
0 & 4 & 2 & 3 & 1 \\
\end{array}
\right)
$,
$
\pi_4=\left(
\begin{array}{ccccc}
0 & 1 & 2 & 3 & 4 \\
0 & 3 & 1 & 2 & 4 \\
\end{array}
\right)
$,
$
\pi_5=\left(
\begin{array}{ccccc}
0 & 1 & 2 & 3 & 4 \\
2 & 3 & 4 & 0 & 1 \\
\end{array}
\right)
$,
$
\pi_6=\left(
\begin{array}{ccccc}
0 & 1 & 2 & 3 & 4 \\
2 & 1 & 0 & 3 & 4\\
\end{array}
\right)
$,
$
\pi_7=\left(
\begin{array}{ccccc}
0 & 1 & 2 & 3 & 4 \\
2 & 3 & 1 & 0 & 4 \\
\end{array}
\right)
$, and
$
\pi_8=\left(
\begin{array}{ccccc}
0 & 1 & 2 & 3 & 4 \\
2 & 4 & 0 & 3 & 1 \\
\end{array}
\right)
$.
These permutations give us the entire conjugacy class given by the table below.
\begin{center}
 \begin{tabular}{|c|c|c|c|}
\hline
$(\hat{d}_{\pi(4)},\hat{d}_{\pi(3)},\hat{d}_{\pi(2)},\hat{d}_{\pi(1)},\hat{d}_{\pi(0)})_6$ & $\pi$ & $\tau$ & $(\hat{c}_{\pi(4)},\hat{c}_{\pi(3)},\hat{c}_{\pi(2)},\hat{c}_{\pi(1)},\hat{c}_{\pi(0)})$\\\hline
$(5,1,2,3,4)_6$ & $\varepsilon$ & $\beta_3=(4,2,0)(1,3)$ & $(1,1,0,1,0)$\\\hline
$(3,2,5,1,4)_6$ & $\pi_2$ & $\pi_2^{-1}\beta_3\pi_2$ &$(1,0,1,1,0)$\\\hline 
$(3,1,2,5,4)_6$ & $\pi_3$ & $\pi_3^{-1}\beta_3\pi_3$ & $(1,1,0,1,0)$\\\hline 
$(5,2,3,1,4)_6$ & $\pi_4$ & $\pi_4^{-1}\beta_3\pi_4$ &$(1,0,1,1,0)$\\\hline 
$(3,4,5,1,2)_6$ & $\pi_5$ & $\pi_5^{-1}\beta_3\pi_5$ &$(1,0,1,1,0)$\\\hline 
$(5,1,4,3,2)_6$ & $\pi_6$ & $\pi_6^{-1}\beta_3\pi_6$ & $(1,1,0,1,0)$\\\hline 
$(5,4,3,1,2)_6$ & $\pi_7$ & $\pi_7^{-1}\beta_3\pi_7$ &$(1,0,1,1,0)$\\\hline 
$(3,1,4,5,2)_6$ & $\pi_8$ & $\pi_8^{-1}\beta_3\pi_8$ & $(1,1,0,1,0)$\\\hline 
\end{tabular}
\end{center}

We now consider the final candidate $\beta_4=(4,0,1,3)$.
By Theorem \ref{n_div_b}, any suitable $\pi$ in this class must satisfy
$P_{\pi}(2P_{\beta_4}-I)\textbf{d}=(2P_{\pi^{-1}\beta_4\pi}-I)P_{\pi}\textbf{d}=(6P_{\psi}-I)P_{\pi}\textbf{c}$, or,
$$
P_{\pi}(2P_{\beta_4}-I)\left[\begin{matrix}
    2\\
   1\\
   5\\
   3\\
   4\\
 \end{matrix}
 \right]
 =
(6P_{\psi}-I)
P_{\pi}\left[\begin{matrix}
    0\\
   1\\
   1\\
   1\\
   0\\
 \end{matrix}
\right],$$
which simplifies to
$$
P_{\pi}\left[\begin{matrix}
    0\\
   1\\
   1\\
   1\\
   0\\
 \end{matrix}
 \right]
 =
P_{\psi}
P_{\pi}\left[\begin{matrix}
    0\\
   1\\
   1\\
   1\\
   0\\
 \end{matrix}
 \right].
$$
We plainly see that $P_{\psi}$ must fix the column vector $P_{\pi}\left[\begin{matrix}
    0\\
   1\\
   1\\
   1\\
   0\\
 \end{matrix}
 \right].$
Since there is no permutation, $\pi$, which makes this statement true,
there is no $\pi$ for which $(d_{\pi(k)}, d_{\pi(k-1)},\ldots, d_{\pi(0)})_b$
is a $(2,6,\pi^{-1}\beta_4\pi)$-permutiple. 
\end{example}

We shall now apply our techniques to another example with more digits and a larger base. 
However, instead of beginning with a palintiple as in the previous examples, we shall find all permutiples
having the same digits as the base-12 cyclic number $(1, 8, 6, 10, 3, 5)_{12}$. 
In particular, we shall find all $(3,12)$-permutiples from the example 
$(5, 1, 8, 6, 10, 3)_{12} = 3 \cdot (1, 8, 6, 10, 3, 5)_{12}$ with carries $(c_5,c_4,c_3,c_2,c_1,c_0)=(2, 1, 2, 0, 1, 0)$. 

\begin{example}
 We begin by computing the conjugacy class containing 
 $(d_5,d_4,d_3,d_2,d_1,d_0)=(5, 1, 8, 6, 10, 3)_{12} = 3 \cdot (1, 8, 6, 10, 3, 5)_{12}$.
 The carries are given by $(c_5,c_4,c_3,c_2,c_1,c_0)=(2, 1, 2, 0, 1, 0)$, so 
 by Corollary \ref{cycle_theorem}, any permutation, $\pi$, of the digits of our initial example must satisfy
 $$
 \left[
\begin{array}{c}
1\\
0\\
2\\
1\\
2\\
0\\
\end{array}
\right]
=
P_{\pi \psi \pi^{-1}}
\left[
\begin{array}{c}
0\\
1\\
0\\
2\\
1\\
2\\
\end{array}
\right].
 $$
Translating the above to component form, we have
$$
\begin{array}{l}
c_{\pi \psi \pi^{-1}(0)}=1=c_1 \mbox{ or } c_4,\\
c_{\pi \psi \pi^{-1}(1)}=0=c_0 \mbox{ or } c_2,\\
c_{\pi \psi \pi^{-1}(2)}=2=c_3 \mbox{ or } c_5,\\
c_{\pi \psi \pi^{-1}(3)}=1=c_1 \mbox{ or } c_4,\\
c_{\pi \psi \pi^{-1}(4)}=2=c_3 \mbox{ or } c_5,\\
c_{\pi \psi \pi^{-1}(5)}=0=c_0 \mbox{ or } c_2.\\
\end{array}
$$ 
Then, it must be that $\pi \psi \pi^{-1}$ has the form
$
\left(
\begin{array}{cccccc}
0 & 1 & 2 & 3 & 4 & 5\\
1 \mbox{ or } 4 & 0 \mbox{ or } 2  & 3 \mbox{ or } 5 & 4 \mbox{ or } 1 & 5 \mbox{ or } 3 & 0 \mbox{ or } 2 \\
\end{array}
\right).
$
From the above, there are 3 possible 6-cycles which yield acceptable permutations 
$\pi \psi \pi^{-1}=(\pi(0),\pi(1),\pi(2),\pi(3),\pi(4),\pi(5))$, namely,
$(0,1,2,3,4,5)$, $(0,4,5,2,3,1)$, and $(0,4,3,1,2,5)$.
Since $c_0=c_2=0$, we have six values of $\pi$, namely, the identity $\varepsilon$, $\pi_2=\psi^2$,
$\pi_3=
\left(
\begin{array}{cccccc}
0 & 1 & 2 & 3 & 4 & 5 \\
0 & 4 & 5 & 2 & 3 & 1\\
\end{array}
\right)$,
$\pi_4=
\left(
\begin{array}{cccccc}
0 & 1 & 2 & 3 & 4 & 5 \\
2 & 3 & 1 & 0 & 4 & 5\\
\end{array}
\right)$,
$\pi_5=
\left(
\begin{array}{cccccc}
0 & 1 & 2 & 3 & 4 & 5 \\
0 & 4 & 3 & 1 & 2 & 5\\
\end{array}
\right)$, and
$\pi_6=
\left(
\begin{array}{cccccc}
0 & 1 & 2 & 3 & 4 & 5 \\
2 & 5 & 0 & 4 & 3 & 1\\
\end{array}
\right)$,
which yield the following permutiple conjugacy class.
\begin{center}
 \begin{tabular}{|c|c|c|c|}
\hline
$(d_{\pi(5)},d_{\pi(4)},d_{\pi(3)},d_{\pi(2)},d_{\pi(1)},d_{\pi(0)})_{12}$ & $\pi$ & $\tau$ & $(c_{\pi(5)},c_{\pi(4)},c_{\pi(3)},c_{\pi(2)},c_{\pi(1)},c_{\pi(0)})$\\\hline
$(5,1,8,6,10,3)_{12}$ & $\varepsilon$ & $\psi^5$ & $(2,1,2,0,1,0)$\\\hline
$(10,3,5,1,8,6)_{12}$ & $\pi_2=\psi^2$ & $\psi^{-2}\psi^5\psi^{2}=\psi^5$ &$(1,0,2,1,2,0)$\\\hline 
$(10,8,6,5,1,3)_{12}$ & $\pi_3$ & $\pi_3^{-1} \psi^5 \pi_3$ & $(1,2,0,2,1,0)$\\\hline 
$(5,1,3,10,8,6)_{12}$ & $\pi_4$ & $\pi_4^{-1} \psi^5 \pi_4$ &$(2,1,0,1,2,0)$\\\hline 
$(5,6,10,8,1,3)_{12}$ & $\pi_5$ & $\pi_5^{-1} \psi^5 \pi_5$ &$(2,0,1,2,1,0)$\\\hline 
$(10,8,1,3,5,6)_{12}$ & $\pi_6$ & $\pi_6^{-1} \psi^5 \pi_6$ & $(1,2,1,0,2,0)$\\\hline 
\end{tabular}
\end{center}

We now set to the task of finding all candidate base permutations. 
By Corollary \ref{base_perm}, we have for any base permutation, $\beta$, that
$$ 
\left[\begin{matrix}
    d_{\beta(0)}\\
   d_{\beta(1)}\\
   d_{\beta(2)}\\
   d_{\beta(3)}\\
   d_{\beta(4)}\\
   d_{\beta(4)}\\   
 \end{matrix}
\right] 
\equiv
 \left[
 \begin{matrix}
   1\\
   3\\
   2\\
   2\\
    0\\
    1\\
 \end{matrix}
\right]
\mod 4.
$$
The above expressed in component form gives us
$$
\begin{array}{l}
d_{\beta(0)}=1=d_4 \mbox{ or } d_5,\\
d_{\beta(1)}=3=d_0, \\
d_{\beta(2)}=2=d_1 \mbox{ or } d_2,\\
d_{\beta(3)}=2=d_1 \mbox{ or } d_2,\\
d_{\beta(4)}=0=d_3, \\
d_{\beta(5)}=1=d_4 \mbox{ or } d_5.\\
\end{array}
$$ 
Then, a base permutation, $\beta$, has the form
$
\left(
\begin{array}{cccccc}
0 & 1 & 2 & 3 & 4 & 5\\
4 \mbox{ or } 5 & 0  & 1 \mbox{ or } 2 & 1 \mbox{ or } 2 & 3 & 4 \mbox{ or } 5 \\
\end{array}
\right).
$
The possible base permutations are then $\beta_1=\psi^5$ (which corresponds to the conjugacy class calculated above),
$\beta_2=(0,5,4,3,1)$, $\beta_3=(0,4,3,2,1)$, and $\beta_4=(0,4,3,1)$.
As we shall presently see, unlike the previous example, each candidate base permutation yields permutiples.

We now determine the conjugacy class for $\beta_2=(0,5,4,3,1)$.
In order to do this, we must first find an example from this class. 
We shall see that the techniques we used for the previous base-6 examples will not work in this case.
Therefore, we must appeal to more rudimentary techniques. In particular,
Theorem \ref{fund} will prove useful.
We need permutations $\tau_2$ and $\pi$ yielding a $(3,12,\tau_2)$-permutiple such that 
$\pi \tau_2 \pi^{-1}=(0,5,4,3,1)$. 
Rearranging, we have $\tau_2=(\pi^{-1}(0),\pi^{-1}(5),\pi^{-1}(4),\pi^{-1}(3),\pi^{-1}(1))$.
In order to reduce the number of candidate permutations, we attempt to find a value of $\pi$
which fixes 0. Then $\tau_2=(0,\pi^{-1}(5),\pi^{-1}(4),\pi^{-1}(3),\pi^{-1}(1))$.
Applying  Theorem \ref{n_div_b}, the new permutiple
$(d_{\pi(5)},d_{\pi(4)},d_{\pi(3)},d_{\pi(2)},d_{\pi(1)},d_{\pi(0)})_{12}$
has the carry vector $(c_{\pi(5)},c_{\pi(4)},c_{\pi(3)},c_{\pi(2)},c_{\pi(1)},c_{\pi(0)})$.
Applying Theorem \ref{fund} to the $j$th digit, we have that $3d_{\pi \tau_2(j)}-d_{\pi(j)}=12 c_{\pi \psi (j)}-c_{\pi(j)}$.
We know by the above that $\pi(0)=0$ and $\tau_2(0)=\pi^{-1}(5)$. Therefore, for $j=0$, we have
$3d_{5}-d_{0}=3\cdot 5 -3 =12 c_{\pi(1)}$. Thus, $c_{\pi(1)}=1=c_1 \mbox{ or } c_4$. 
So $\pi(1)$ must be either 1 or 4. We shall attempt to find a solution with $\pi(1)=1$.
Then $\tau_2=(0,\pi^{-1}(5),\pi^{-1}(4),\pi^{-1}(3),1)$.
Another application of Theorem \ref{fund} for $j=1$ yields
$3d_{\pi \tau_2(1)}-d_{\pi(1)}=12 c_{\pi \psi (1)}-c_{\pi(1)}$, or 
$-1=3 \cdot 3 - 10=3d_{0}-d_{1}=12 c_{\pi(2)}-c_{1}=12c_{\pi(2)}-1$. 
Thus, $c_{\pi(2)}=0$, so $\pi(2)$ equals either 0 or 2, but since $\pi(0)=0$, it must be that
$\pi(2)=2$. Applying Theorem \ref{fund} for $j=2$, we obtain
$3d_{\pi \tau_2(2)}-d_{\pi(2)}=12 c_{\pi \psi (2)}-c_{\pi(2)}$, which, using the above information
reduces to $c_{\pi(3)}=1$. That is, $\pi(3)$ is either 1 or 3. But since $\pi(1)=1$,
it follows that $\pi(3)=4$. Thus, $\tau_2=(0,\pi^{-1}(5),3,\pi^{-1}(3),1)$.
In the above fashion, Theorem \ref{fund} for $j=3$ then gives that 
$c_{\pi(4)}=2$. Then $\pi(4)$ must equal either 3 or 5. 
Letting $\pi(4)=3$, we have $\tau_2=(0,5,3,4,1)$. 

From the above, $\pi=(4,3)$ and  $\tau_2=(0,5,3,4,1)$ give us a solution
from the conjugacy class corresponding to $\beta_2=(0,5,4,3,1)$, namely,
the $(3,12,\tau_2)$-permutiple,
$(d_{\pi(5)},d_{\pi(4)},d_{\pi(3)},d_{\pi(2)},d_{\pi(1)},d_{\pi(0)})_{12}
=(5, 8, 1, 6, 10, 3)_{12} = 3 \cdot (1, 10, 8, 6, 3, 5)_{12}$, with carries
$(\hat{c}_5,\hat{c}_4,\hat{c}_3,\hat{c}_2,\hat{c}_1,\hat{c}_0)=(2, 2, 1, 0, 1, 0)$.
Taking $\hat{p}=(\hat{d}_{5},\hat{d}_{4},\hat{d}_{3},\hat{d}_{2},\hat{d}_{1},\hat{d}_{0})_{12}
=(5, 8, 1, 6, 10, 3)_{12}$ to be our reference example from this class, we now use Corollary \ref{cycle_theorem}
to find the remaining elements of this class:
 $$
 \left[
\begin{array}{c}
1\\
0\\
1\\
2\\
2\\
0\\
\end{array}
\right]
=
P_{\pi \psi \pi^{-1}}
\left[
\begin{array}{c}
0\\
1\\
0\\
1\\
2\\
2\\
\end{array}
\right].
 $$ 
We express the above in component form as
$$
\begin{array}{l}
\hat{c}_{\pi \psi \pi^{-1}(0)}=1=\hat{c}_1 \mbox{ or } \hat{c}_3,\\
\hat{c}_{\pi \psi \pi^{-1}(1)}=0=\hat{c}_0 \mbox{ or } \hat{c}_2,\\
\hat{c}_{\pi \psi \pi^{-1}(2)}=1=\hat{c}_1 \mbox{ or } \hat{c}_3,\\
\hat{c}_{\pi \psi \pi^{-1}(3)}=2=\hat{c}_4 \mbox{ or } \hat{c}_5,\\
\hat{c}_{\pi \psi \pi^{-1}(4)}=2=\hat{c}_4 \mbox{ or } \hat{c}_5,\\
\hat{c}_{\pi \psi \pi^{-1}(5)}=0=\hat{c}_0 \mbox{ or } \hat{c}_2.\\
\end{array}
$$ 
Therefore, $\pi \psi \pi^{-1}$
can be expressed as
$
\left(
\begin{array}{cccccc}
0 & 1 & 2 & 3 & 4 & 5\\
1 \mbox{ or } 3 & 0 \mbox{ or } 2  & 1 \mbox{ or } 3 &  5 & 4 & 0 \mbox{ or } 2 \\
\end{array}
\right).
$
In particular, we have either 
$(0,1,2,3,4,5)$, or $(0,3,4,5,2,1)$, and since $\hat{c}_0=\hat{c}_2=0$, 
the 4 permutations induced by the digits of $\hat{p}$ are 
 the identity $\varepsilon$ (corresponding to $\hat{p}$),
$
\pi_2=
\left(
\begin{array}{cccccc}
0 & 1 & 2 & 3 & 4 & 5\\
2 & 3 & 4 & 5 & 0 & 1 \\
\end{array}
\right),
$
$
\pi_3=
\left(
\begin{array}{cccccc}
0 & 1 & 2 & 3 & 4 & 5\\
0 & 3 & 4 & 5 & 2 & 1 \\
\end{array}
\right),
$
and
$
\pi_4=
\left(
\begin{array}{cccccc}
0 & 1 & 2 & 3 & 4 & 5\\
2 & 1 & 0 & 3 & 4 & 5 \\
\end{array}
\right).
$
The conjugacy class corresponding to $\beta_2=(0,5,4,3,1)$ is given by the table below. 
\begin{center}
 \begin{tabular}{|c|c|c|c|}
\hline
$(\hat{d}_{\pi(5)},\hat{d}_{\pi(4)},\hat{d}_{\pi(3)},\hat{d}_{\pi(2)},\hat{d}_{\pi(1)},\hat{d}_{\pi(0)})_{12}$ & $\pi$ & $\tau$ & $(\hat{c}_{\pi(5)},\hat{c}_{\pi(4)},\hat{c}_{\pi(3)},\hat{c}_{\pi(2)},\hat{c}_{\pi(1)},\hat{c}_{\pi(0)})$\\\hline
$(5,8,1,6,10,3)_{12}$ & $\varepsilon$ & $\tau_2=(0,5,3,4,1)$ & $(2,2,1,0,1,0)$\\\hline
$(10,3,5,8,1,6)_{12}$ & $\pi_2$ & $\pi_2^{-1} \tau_2 \pi_2$  & $(1,0,2,2,1,0)$\\\hline
$(10,6,5,8,1,3)_{12}$ & $\pi_3$ & $\pi_3^{-1} \tau_2 \pi_3$ &$(1,0,2,2,1,0)$\\\hline 
$(5,8,1,3,10,6)_{12}$ & $\pi_4$ & $\pi_3^{-1} \tau_2 \pi_4$ & $(2,2,1,0,1,0)$\\\hline 
\end{tabular}
\end{center}

Finding an example from the conjugacy class corresponding to $\beta_3=(0,4,3,2,1)$,
repeated use of Theorem \ref{fund} as above yields the permutations $\pi=(1,2,3,4,5)$ and
$\tau_3=(5,0,3,2,1)$. From these we obtain the $(3,12,\tau_3)$-permutiple, 
$(d_{\pi(5)},d_{\pi(4)},d_{\pi(3)},d_{\pi(2)},d_{\pi(1)},d_{\pi(0)})_{12}
=(10, 5, 1, 8, 6, 3)_{12} = 3 \cdot (3, 5, 8, 6, 10, 1)_{12}$, with carries 
$(\hat{c}_5,\hat{c}_4,\hat{c}_3,\hat{c}_2,\hat{c}_1,\hat{c}_0)=(1, 2, 1, 2, 0, 0)$.

In similar fashion to calculations for the above conjugacy classes, the reader may verify that this example 
via Corollary \ref{cycle_theorem} gives the following four permutations:
 the identity $\varepsilon$,
$
\pi_5=
\left(
\begin{array}{cccccc}
0 & 1 & 2 & 3 & 4 & 5\\
1 & 2 & 3 & 4 & 5 & 0 \\
\end{array}
\right),
$
$
\pi_6=
\left(
\begin{array}{cccccc}
0 & 1 & 2 & 3 & 4 & 5\\
0 & 1 & 4 & 3 & 2 & 5 \\
\end{array}
\right),
$
and
$
\pi_7=
\left(
\begin{array}{cccccc}
0 & 1 & 2 & 3 & 4 & 5\\
1 & 4 & 3 & 2 & 5 & 0 \\
\end{array}
\right),
$
and thus, the conjugacy class corresponding to $\beta_3=(0,4,3,2,1)$ is given by the following table. 
\begin{center}
 \begin{tabular}{|c|c|c|c|}
\hline
$(\hat{d}_{\pi(5)},\hat{d}_{\pi(4)},\hat{d}_{\pi(3)},\hat{d}_{\pi(2)},\hat{d}_{\pi(1)},\hat{d}_{\pi(0)})_{12}$ & $\pi$ & $\tau$ & $(\hat{c}_{\pi(5)},\hat{c}_{\pi(4)},\hat{c}_{\pi(3)},\hat{c}_{\pi(2)},\hat{c}_{\pi(1)},\hat{c}_{\pi(0)})$\\\hline
$(10, 5, 1, 8, 6, 3)_{12}$ & $\varepsilon$ & $\tau_3=(5,0,3,2,1)$ & $(1, 2, 1, 2, 0, 0)$\\\hline
$(3, 10, 5, 1, 8, 6)_{12}$ & $\pi_5$ & $\pi_5^{-1} \tau_3 \pi_5$ & $(0, 1, 2, 1, 2, 0)$\\\hline 
$(10, 8, 1, 5, 6, 3)_{12}$ & $\pi_6$ & $\pi_6^{-1} \tau_3 \pi_6$ &$(1, 2, 1, 2, 0, 0)$\\\hline 
$(3, 10, 8, 1, 5, 6)_{12}$ & $\pi_7$ & $\pi_7^{-1} \tau_3 \pi_7$ &$(0, 1, 2, 1, 2, 0)$\\\hline
\end{tabular}
\end{center}

To an example from our final conjugacy class corresponding to $\beta_4=(0,4,3,1)$,
we use Theorem \ref{fund} as above to find the permutations $\pi=(5,1,2,4)$
and $\tau_4=(0,2,3,5)$.
From these we obtain the $(3,12,\tau_4)$-permutiple, 
$(d_{\pi(5)},d_{\pi(4)},d_{\pi(3)},d_{\pi(2)},d_{\pi(1)},d_{\pi(0)})_{12}
=(10, 5, 8, 1, 6, 3)_{12} = 3 \cdot (3, 5, 10, 8, 6, 1)_{12}$, with carries 
$(\hat{c}_5,\hat{c}_4,\hat{c}_3,\hat{c}_2,\hat{c}_1,\hat{c}_0)=(1, 2, 2, 1, 0, 0)$.

Again, we shall leave the details of the calculations involving Corollary \ref{cycle_theorem} to
the reader who may verify that this permutiple yields the following two permutations: the identity $\varepsilon$ and 
$
\pi_8=
\left(
\begin{array}{cccccc}
0 & 1 & 2 & 3 & 4 & 5\\
1 & 2 & 3 & 4 & 5 & 0\\
\end{array}
\right).
$
The conjugacy class for $\beta_4=(0,4,3,1)$ is given by the table below. 
\begin{center}
 \begin{tabular}{|c|c|c|c|}
\hline
$(\hat{d}_{\pi(5)},\hat{d}_{\pi(4)},\hat{d}_{\pi(3)},\hat{d}_{\pi(2)},\hat{d}_{\pi(1)},\hat{d}_{\pi(0)})_{12}$ & $\pi$ & $\tau$ & $(\hat{c}_{\pi(5)},\hat{c}_{\pi(4)},\hat{c}_{\pi(3)},\hat{c}_{\pi(2)},\hat{c}_{\pi(1)},\hat{c}_{\pi(0)})$\\\hline
$(10, 5, 8, 1, 6, 3)_{12}$ & $\varepsilon$ & $\tau_4=(0,2,3,5)$  &$(1, 2, 2, 1, 0, 0)$\\\hline 
$(3, 10, 5, 8, 1, 6)_{12}$ & $\pi_8$ & $\pi_8^{-1} \tau_4 \pi_8$ & $(0, 1, 2, 2, 1, 0)$\\\hline
\end{tabular}
\end{center}

\end{example}

\section{Future Directions and Concluding Remarks}

While we have developed methods for finding all permutiples having the same digits and carries
as a known example (which, as we have seen, allow us to find all desired examples when $n$ divides $b$), 
the next step forward is to find methods which give us all the desired examples. 

Aside from the above problem, the work we have done here leaves many questions.
Are there other conditions or divisibility criteria which, using the methods developed here or some
slight variation thereof, allow us to find all permutiples having the same digits as a
single permutiple example already in hand?
How do the orders of 
base permutations of each conjugacy class relate to the order of $\sigma$?
Are there any restrictions on the size of the conjugacy classes?  
Regarding the general permutiple problem, 
other questions certainly abound: what kinds of permutations can $\sigma$ be?
What sort of restrictions might there be on the order of $\sigma$? 

As we have already seen, repeated application of Theorem \ref{fund}, Corollary \ref{cycle_theorem},
and Corollary \ref{base_perm} give us 
methods for finding permutations of the digits of a starting permutiple which yield other permutiples.
However, in fairness, we should add that some examples require substantially more calculation than others in order 
to find every example. 
For instance, considering the base-10 cyclic example given in the introductory paragraph, $714285 = 5 \cdot 142857$,
the reader may verify that Corollary \ref{base_perm} requires us to examine 36 possible base permutations,
among which, only four actually yield non-empty conjugacy classes. 
This is all to say that future efforts should look for criteria for determining when base permutations
yield non-empty conjugacy classes.

Another avenue of investigation is finding a way to generalize Sloane's \cite{sloane} Young graph representation
of palintiple structure in order to visualize permutiple structure. 
We imagine that such a construction would allow us to classify, and better understand,
permutiple structure.
We also imagine that it would be substantially more complex.
Finally, as Young graphs themselves are an interesting area of study in their own right,
we conjecture that its generalization would also justify future study.

It is also worth mentioning that the matrices $nP_{\sigma}-I$ and $bP_{\psi}-I$, as well as their inverses 
(given by
$\frac{1}{n^{|\sigma|}-1} \sum_{\ell=0}^{|\sigma|-1}(nP_{\sigma})^{\ell}$ and
$\frac{1}{b^{k+1}-1} \sum_{\ell=0}^{k}(bP_{\psi})^{\ell}$, respectively), all have interesting properties. 
For instance, both $bP_{\psi}-I$ and its inverse are circulant matrices.
Moreover, both $nP_{\sigma}-I$ and $bP_{\psi}-I$ have the property that
every column and row sum to $n-1$ and $b-1$, respectively.
In addition to these properties, we ask if these matrices are endowed with other special properties
when $(n,b,\sigma)$-permutiples exist.

With the palintiple problem firmly in mind,
the difficulty of particular digit permutation problems might make the general permutiple problem seem intractable.
However, the methods developed here seem to prove otherwise; there is certainly structure that one can take 
advantage of. Moreover, as noted by Holt \cite{holt_3}, studying the general problem may very well 
offer insight into particular problems which study only one kind of permutation. 
In particular, it might be possible to derive (in the manner described by Holt \cite{holt_3}) entire palintiple classes
from certain permutiple types such as those mentioned by Holt \cite{holt_3}. 
What is more, it seems that general permutiples can also be derived from other permutiples.
As a noteworthy example, we present the cyclic $(6,12,\psi^3)$-permutiple, $(10,3,5,1,8,6)_{12}=6\cdot(1,8,6,10,3,5)_{12}$, 
whose non-zero carries are the digits of the $(2,6)$-palintiple which served as our initial base-6 example, 
$(4,3,5,1,2)_6=2 \cdot (2,1,5,3,4)_6$. 
Another example is the starting permutiple of our last example, 
$(5, 1, 8, 6, 10, 3)_{12} = 3 \cdot (1, 8, 6, 10, 3, 5)_{12}$, with carries $(c_5,c_4,c_3,c_2,c_1,c_0)=(2, 1, 2, 0, 1, 0)$.
The carries (excluding $c_0=0$) are the digits of a $(2,3)$-palintiple, which again, as shown by Holt \cite{holt_3}, gives
rise to an entire family of derived palintiples.
When choosing our examples, we did not intentionally seek these out, 
but rather, we only later noticed that such examples of ``derived permutiples'' seem to naturally abound. 
We suspect that there are deep and interesting connections between these examples which have yet to be explored.


\end{document}